\journal{Journal of Computational Physics}
\definecolor{revgreen}{rgb}{0,0.5,0}
\theoremstyle{plain}
\newtheorem{theorem}{Theorem}[section]
\newtheorem{lemma}[theorem]{Lemma}
\newtheorem{proposition}[theorem]{Proposition}
\theoremstyle{definition}
\newtheorem{assumption}[theorem]{Assumption}
\theoremstyle{remark}
\newtheorem{remark}[theorem]{Remark}
\newcommand{\mF}{\mathcal{F}}
\newcommand{\mM}{\mathcal{M}}
\newcommand{\mN}{\mathcal{N}}
\newcommand{\mD}{\mathcal{D}}
\newcommand{\bbE}{{\mathbb{E}}}
\newcommand{\bbR}{\mathbb{R}}
\newcommand{\bI}{\mathbf{I}}
\newcommand{\bQ}{\boldsymbol{Q}}
\newcommand{\bL}{\boldsymbol{L}}
\newcommand{\bJ}{\mathbf{J}}
\newcommand{\bZ}{\boldsymbol{Z}}
\newcommand{\bz}{\boldsymbol{z}}
\newcommand{\bX}{\boldsymbol{X}}
\newcommand{\bx}{\boldsymbol{x}}
\newcommand{\bY}{\boldsymbol{Y}}
\newcommand{\by}{\boldsymbol{y}}
\newcommand{\bS}{\boldsymbol{S}}
\newcommand{\bSigma}{\boldsymbol{\Sigma}}
\newcommand{\bsigma}{\boldsymbol{\sigma}}
\newcommand{\bmu}{\boldsymbol{\mu}}
\newcommand{\bveps}{\boldsymbol{\varepsilon}}
\newcommand{\bH}{\textbf{H}}
\newcommand{\bzero}{\boldsymbol{0}}
\newcommand{\bW}{\boldsymbol{W}}
\newcommand{\ba}{\boldsymbol{a}}
\newcommand{\deq}{\stackrel{\text{d}}{=}}
\DeclareRobustCommand{\cev}[1]{%
  \mathpalette\do@cev{#1}%
}
\newcommand{\do@cev}[2]{%
  \fix@cev{#1}{+}%
  \reflectbox{$\m@th#1\vec{\reflectbox{$\fix@cev{#1}{-}\m@th#1#2\fix@cev{#1}{+}$}}$}%
  \fix@cev{#1}{-}%
}
\newcommand{\fix@cev}[2]{%
  \ifx#1\displaystyle
    \mkern#23mu
  \else
    \ifx#1\textstyle
      \mkern#23mu
    \else
      \ifx#1\scriptstyle
        \mkern#22mu
      \else
        \mkern#22mu
      \fi
    \fi
  \fi
}
\begin{document}

\begin{frontmatter}



\title{IEnSF: Iterative Ensemble Score Filter for Reducing Error in Posterior Score Estimation in Nonlinear Data Assimilation}


\author[label3]{Zezhong Zhang} 
\author[label2]{Feng Bao} 
\author[label1]{Guannan Zhang\corref{cor1}} 

\cortext[cor1]{Corresponding author}

\affiliation[label3]{organization={Department of Mathematics and Statistics, Auburn University},
            addressline={1550 E. Glenn Ave.}, 
            city={Auburn},
            postcode={36830}, 
            state={AL},
            country={USA}}

\affiliation[label2]{organization={Department of Mathematics, Florida State University},
            addressline={1017 Academic Way}, 
            city={Tallahassee},
            postcode={32306}, 
            state={FL},
            country={USA}}
            
\affiliation[label1]{organization={Computer Science and Mathematics Division, Oak Ridge National Laboratory},
            addressline={1 Bethel Valley Road}, 
            city={Oak Ridge},
            postcode={37831}, 
            state={TN},
            country={USA}}

\begin{abstract}
 {The Ensemble Score Filter (EnSF) is a score-based diffusion model approach for solving high-dimensional and nonlinear data assimilation problems.}
While initial applications of EnSF to the Lorenz-96 model and the quasi-geostrophic system showed potential, the current method employs a heuristic weighted sum to combine the prior and the likelihood score functions. This introduces a structural error into the estimation of the posterior score function in the nonlinear setting. This work addresses this challenge by developing an iterative ensemble score filter (IEnSF) that applies an iterative algorithm as an outer loop around the reverse-time stochastic differential equation solver. When the state dynamics or the observation operator is nonlinear, the iterative algorithm can gradually reduce the posterior score estimation error by improving the accuracy of approximating the conditional expectation of the likelihood score function. 
 {The number of iterations required depends on the distance between the prior and posterior distributions.}
Numerical experiments demonstrate that the IEnSF algorithm substantially reduces the error in posterior score estimation in the nonlinear setting and thus improves the accuracy of tracking high-dimensional dynamical systems.

\end{abstract}



\begin{keyword}
Generative artificial intelligence, Diffusion models, Nonlinear data assimilation, Ensemble score filter, Posterior score estimation, Iterative Bayesian inference
\end{keyword}

\tnotetext[fn1]{{\bf Notice}:  This manuscript has been authored by UT-Battelle, LLC, under contract DE-AC05-00OR22725 with the US Department of Energy (DOE). The US government retains and the publisher, by accepting the article for publication, acknowledges that the US government retains a nonexclusive, paid-up, irrevocable, worldwide license to publish or reproduce the published form of this manuscript, or allow others to do so, for US government purposes. DOE will provide public access to these results of federally sponsored research in accordance with the DOE Public Access Plan.}

\end{frontmatter}


\section{Introduction}



Data assimilation (DA) is the process of integrating noisy and partial observational data into complex dynamical systems, which are often high-dimensional, nonlinear, and potentially chaotic, to obtain improved estimates of the system state. It plays a central role in a wide range of scientific and engineering applications, including weather forecasting, climate modeling, material sciences, and biology \cite{evensen1994sequential, houtekamer1998data, shaman2012forecasting}. Nonlinear filtering methods were developed to address the data assimilation problem through a sequential Bayesian inference framework \cite{kalman1960new,evensen2009data,jazwinski2013stochastic, gordon1993novel}. To account for uncertainties arising from both the system dynamics and the observational noise, nonlinear filtering aims to characterize the conditional distribution of the state variable given the observational data, known as the \emph{filtering distribution}. From this distribution, one can obtain not only the state estimate but also an associated measure of uncertainty.

Depending on the assumptions imposed on the filtering distribution, two major classes of methods have been developed: the \emph{Kalman family} \cite{kalman1960new, gelb1974applied, julier1997new, evensen1994sequential, houtekamer1998data, hunt2007efficient} and the \emph{Particle family} \cite{gordon1993novel, kitagawa1996monte,doucet2001sequential}. Under the Gaussian assumption, which holds when both the state dynamics and the observation operator are linear and the noise is Gaussian, the Kalman filter and its extensions provide analytical updates for the mean and covariance of the Gaussian filtering distribution. 
However, maintaining and updating the covariance matrix becomes computationally prohibitive for high-dimensional systems. To address this, ensemble-based methods such as the \emph{Ensemble Kalman Filter} (EnKF) \cite{evensen1994sequential} and the \emph{Local Ensemble Transform Kalman Filter} (LETKF) \cite{hunt2007efficient} approximate the state distribution using a finite ensemble of samples, replacing the exact covariance with the sample covariance, thereby enabling scalability to large systems. 
 {A key ingredient enabling this when the ensemble is much smaller than the state dimension is covariance localization, which suppresses spurious long-range sample correlations, either by tapering the sample covariance or by restricting each local analysis to nearby observations, and is typically combined with multiplicative inflation to counteract the underestimation of ensemble variance.}
In contrast, particle-family filters represent the distribution as a weighted empirical distribution, allowing for non-Gaussian and multi-modal filtering distributions that are better suited for nonlinear problems. 
While particle filters perform well in low-dimensional nonlinear systems, their effectiveness in high-dimensional settings is severely limited by weight degeneracy and the curse of dimensionality, as the number of required particles grows exponentially with system dimension and observation dimension  {\cite{bengtsson2008curse, snyder2008obstacles, snyder2015performance}}. 
Although several improved particle filters have been proposed \cite{pitt1999filtering, gilks2001following, Rebeschini2013CanLP} {, including localized particle filters designed for high-dimensional systems \cite{poterjoy2016localized, poterjoy2016efficient}},  {their broad adoption in large-scale operational applications remains limited}.
As a result, ensemble Kalman methods such as LETKF remain the practical standard due to their scalability, despite their inherent Gaussian assumption, which introduces bias in strongly nonlinear regimes and requires careful tuning of localization and inflation parameters. 
 {To better handle nonlinearity, a family of iterated Kalman and ensemble-variational methods has been developed, including the iterated extended Kalman filter \cite{bell1993iterated}, the iterative ensemble Kalman filter and smoother \cite{sakov2012iterative, bocquet2014iterative}, and the Levenberg--Marquardt form of the iterative ensemble smoother \cite{chen2013levenberg}. These methods handle nonlinearity through an outer loop that repeatedly relinearizes the observation operator and refines a Gaussian analysis, using the observation-operator gradient obtained either directly or from the ensemble.}
 {Even with these advances,} the greatest practical challenge in modern data assimilation lies in the simultaneous presence of high dimensionality and nonlinearity, which leads to high-dimensional non-Gaussian filtering distributions that render both Kalman- and particle-type methods either  {statistically inconsistent or computationally infeasible.}

To address this combined challenge, recent advances have drawn inspiration from \emph{generative AI}, particularly from \emph{diffusion models}, which have demonstrated remarkable success in representing complex, high-dimensional, and non-Gaussian data distributions such as images and videos \cite{kingma2021variational, song2020score, ho2020denoising, ho2022video, karras2022elucidating}.  {These models are also capable of conditional sampling, which has been used effectively for Bayesian inference tasks \cite{kawar2022denoising, chung2022diffusion, zhang2025exact}}. In the context of nonlinear filtering, the \emph{score-based filtering framework} \cite{bao2023scorebased} adapts diffusion models to describe the evolution of filtering distributions governed by system dynamics and Bayesian updates. 
The central idea is to represent the filtering distribution through its score function, defined as the gradient of the log-density, and to generate posterior samples by solving a reverse-time stochastic differential equation (SDE). 
While the neural score-based approach effectively handles nonlinear and non-Gaussian systems, it requires retraining a neural network to learn the score function at every assimilation step, making it computationally impractical for large-scale applications {~\cite{hodyss2026using}}.
To overcome this limitation, Bao et al.~\cite{ensf_cmame} proposed the \emph{Ensemble Score Filter} (EnSF), a training-free alternative that estimates the score function directly from ensemble samples. This approach preserves the expressive power of diffusion-based models while achieving the computational efficiency necessary for high-dimensional nonlinear filtering, and has been successfully applied to several challenging large-scale data assimilation and uncertainty quantification problems \cite{lu2024diffusion,  zhang2025exact, bao2025nonlinear, liang2025ensemble}. 
However, EnSF still exhibits a structural bias in its posterior score estimation, since its heuristic formulation enforces consistency with the likelihood score only at the initial and terminal times of the reverse-time SDE. Consequently, the posterior samples generated by the reverse-time SDE may deviate from the true posterior, especially in cases with sparse observations where state updates are insufficiently informed for unobserved components.

In this work, we address the limitations of EnSF by introducing the \emph{Iterative Ensemble Score Filter} (IEnSF).
The central idea of IEnSF is to iteratively refine an approximate posterior score that is derived analytically under a Gaussian mixture assumption on the prior distribution.
The refinement is achieved by repeatedly solving the reverse-time SDE, using the resulting posterior samples to update the estimated posterior score, thereby improving its accuracy.
This iterative process effectively mitigates structural errors in the posterior score and enhances the overall consistency of the posterior distribution. Through comprehensive numerical experiments, we demonstrate that IEnSF achieves significant improvements in posterior consistency and bias reduction.
In particular, we show reduced KL divergence in low-dimensional linear problems and superior performance compared to state-of-the-art methods such as LETKF in high-dimensional, nonlinear Lorenz-96 systems with sparse observations.
 {IEnSF shares the outer-loop structure and the use of the observation-operator gradient with these iterated Kalman and ensemble-variational methods, but differs in what the outer loop refines: it refines only a reference that places the evaluation point of the likelihood score, while the posterior ensemble is generated by the reverse SDE without imposing a Gaussian form.}
The main contributions of this work are summarized as follows:
\begin{itemize}[leftmargin=15pt]
\item We derive an exact analytical expression for the posterior score function under a Gaussian mixture prior assumption, which serves as the theoretical foundation of the proposed algorithm.
\item We develop efficient numerical approximations of the exact posterior score function based on linearization of the observation operator.

\item We introduce an iterative procedure to refine the likelihood score estimation during reverse SDE sampling, which substantially reduces the error in posterior approximation.

\end{itemize}

The rest of the paper is organized as follows.  
Section~\ref{sec.background} provides background on the EnSF framework and diffusion-based sampling.  
In Section~\ref{sec.iensf}, we introduce the proposed IEnSF method: Section~\ref{sec.exact.post.score} derives the exact posterior score under Gaussian mixture priors, Section~\ref{sec.approx.post.score} presents practical approximations, and Section~\ref{sec.iensf.sum} summarizes the full algorithm with iterative refinement.  
Numerical results are presented in Section~\ref{sec.numerical}, and conclusions are drawn in Section~\ref{sec.conclusion}.

\section{Background}\label{sec.background}

We briefly recall the mathematical formulation of the data assimilation (DA) problem and introduce the EnSF framework.

\subsection{Formulation of the data assimilation problem}

Let $\bX_n \in \bbR^d$ denote the system state at discrete time $t_n$, where $\{t_n\}_{n=0}^N$ represents the sequence of observation times.  
The evolution of the state from $t_n$ to $t_{n+1}$ is governed by a stochastic dynamical model,
\begin{equation}\label{eq.filtering.state}
    \text{\em State model:}\quad  
    \bX_{n+1} = \mF_n(\bX_n, \omega_n),
\end{equation}
where $\mF_n: \bbR^d \times \bbR^k \to \bbR^d$ is a numerical forecast model that propagates the state from $t_n$ to $t_{n+1}$, and $\omega_n \in \bbR^k$ represents either the intrinsic stochasticity of the state dynamics or the accumulated uncertainty introduced by the numerical solver.  
Note that the time interval between $t_n$ and $t_{n+1}$ may correspond to multiple internal integration steps of the numerical solver used to advance the forward model.

Observations $\bY_{n+1} \in \bbR^r$ collected at time $t_{n+1}$ are related to the true state $\bX_{n+1}$ through
\begin{equation}\label{eq.filtering.obs}
    \text{\em Observation model:}\quad  
    \bY_{n+1} = \mM(\bX_{n+1}) + \bveps_{n+1},
\end{equation}
where $\mM: \bbR^d \to \bbR^r$ is the observation operator, and $\bveps_{n+1}$ denotes the observational noise at time $t_{n+1}$.  
The objective of data assimilation is to estimate the posterior( or filtering) distribution of the current state $\bX_{n+1}$ conditioned on all available observations $\bY_{1:n+1} := \{\bY_1, \ldots, \bY_{n+1}\}$, which is denoted by $p_{\bX_{n+1}|\bY_{1:n+1}}(\bx_{n+1}|\by_{1:n+1})$.

To obtain this filtering distribution, a recursive Bayesian inference framework is typically employed, in which the existing filtering distribution is sequentially propagated and updated as new observations become available.  
The update from $p_{\bX_n|\bY_{1:n}}$ (at $t_n$) to $p_{\bX_{n+1}|\bY_{1:n+1}}$ (at $t_{n+1}$) consists of two main steps:

\begin{itemize}[leftmargin=20pt]\itemsep0.2cm
    \item \textbf{Prediction step.}  
    The prior distribution at $t_{n+1}$ is obtained by propagating the existing posterior $p_{\bX_{n}|\bY_{1:n}}(\bx_{n}|\by_{1:n})$ at $t_n$, through the dynamical model in Eq.~\eqref{eq.filtering.state} and its density is given by the Chapman--Kolmogorov equation:
    \begin{equation}\label{eq.filtering.prior.ck}
        p_{\bX_{n+1}|\bY_{1:n}}(\bx_{n+1}|\by_{1:n}) 
        = \int p_{\bX_{n+1}|\bX_n}(\bx_{n+1}|\bx_n) \,
          p_{\bX_n|\bY_{1:n}}(\bx_n|\by_{1:n}) \, d\bx_n,
    \end{equation}
    where $p_{\bX_{n+1}|\bX_n}(\bx_{n+1}|\bx_n)$ is the transition density derived from Eq.~\eqref{eq.filtering.state}.  
    The resulting distribution $p_{\bX_{n+1}|\bY_{1:n}}$ is referred to as the \emph{prior filtering distribution} for the next Bayesian update.  
    In practice, instead of solving Eq.~\eqref{eq.filtering.prior.ck} directly, the prior distribution is obtained by propagating an ensemble of samples from $p_{\bX_{n}|\bY_{1:n}}$ through the dynamical model in Eq.~\eqref{eq.filtering.state}.  
    The resulting prior ensemble representing $p_{\bX_{n+1}|\bY_{1:n}}$ is denoted by
    \begin{equation}\label{eq.prior.sample}
        \mathcal{D}_{n+1}^{\text{prior}} := 
        \{\bx_{n+1|n}^1, \ldots, \bx_{n+1|n}^K\}.
    \end{equation}

    \item \textbf{Update step.}  
    Upon receiving a new observation $\bY_{n+1}$, the prior distribution $p_{\bX_{n}|\bY_{1:n}}(\bx_{n}|\by_{1:n})$ at $t_n$ is updated via Bayes’ theorem to obtain the posterior:
    \begin{equation}\label{eq.filtering.bayesian.update}
        \underbrace{p_{\bX_{n+1}|\bY_{1:n+1}}(\bx_{n+1} | \by_{1:n+1})}_{\text{Posterior}}
        \propto
        \underbrace{p_{\bX_{n+1}|\bY_{1:n}}(\bx_{n+1} | \by_{1:n})}_{\text{Prior}}
        \underbrace{p_{\bY_{n+1}|\bX_{n+1}}(\by_{n+1} | \bx_{n+1})}_{\text{Likelihood}},
    \end{equation}
    where the likelihood $p_{\bY_{n+1}|\bX_{n+1}}(\by_{n+1} | \bx_{n+1})$ is defined by the observation model in Eq.~\eqref{eq.filtering.obs}.  
    The resulting posterior ensemble is denoted as
    \begin{equation}\label{eq.post.sample}
        \mathcal{D}_{n+1}^{\text{posterior}} := 
        \{\bx_{n+1|n+1}^1, \ldots, \bx_{n+1|n+1}^K\},
    \end{equation}
    which serves as the initial ensemble for the next assimilation cycle.
\end{itemize}

\subsection{Overview of the Ensemble Score Filter (EnSF)}\label{sec.ensf.overview}

We next provide a brief overview of the Ensemble Score Filter (EnSF) introduced in \cite{ensf_cmame,bao2023scorebased} for solving the data assimilation problem described above, with an emphasis on two major drawbacks of its current score estimation scheme.  
The core idea of EnSF is to employ score-based diffusion models to represent and propagate the dynamics of the posterior filtering distribution in Eq.~\eqref{eq.filtering.bayesian.update}.  
A crucial component of EnSF is therefore the numerical approximation of the score function, which must be carried out consistently in both the prediction and update steps of the recursive Bayesian framework.

\subsubsection{Score-based diffusion model}
We begin with a brief introduction to score-based diffusion models, which form the backbone of the EnSF framework for representing filtering distributions.  
Diffusion models are a class of generative models capable of sampling from a target distribution $p_{\bX}$.  
In EnSF, these target distributions correspond to the prior and posterior filtering distributions at each assimilation step.

The diffusion model consists of two coupled processes: the \emph{forward process} and the \emph{reverse process}.  
The forward process gradually diffuses the target distribution $p_{\bX}$ into pure Gaussian noise by continuously adding small Gaussian perturbations, whereas the reverse process reconstructs samples from the target distribution by progressively denoising from pure Gaussian noise.  
In EnSF, both processes are defined continuously over a pseudo-time domain $t \in \mathcal{T} = (0, T)$.  
In this continuous-time setting, both processes are governed by stochastic differential equations (SDEs) or ODEs .  
The \emph{forward SDE} describes the forward diffusion process as follows:
\begin{equation}\label{eq.forward.sde}
    d\bZ_{t} = b(t) \bZ_{t}\, dt + \sigma(t)\, d\bW_t, \quad \bZ_{0} \deq \bX
\end{equation}
where $\bW_t$ is the standard Brownian motion running forward in time, and the initial distribution $\bZ_{0}$ follows the target distribution $p_{\bX}$.
According to \cite{kingma2021variational}, the drift and diffusion coefficients $b(t)$ and $\sigma(t)$ in Eq.~\eqref{eq.forward.sde} are determined by the noise schedule $(\alpha_t, \beta_t)$ as
\begin{equation}\label{eq.forward.sde.coef}
b(t) = \frac{{\rm d} \log \alpha_t}{{\rm d} t},\;\;\; \sigma^2(t) = \frac{{\rm d} \beta_t^2}{{\rm d}t} - 2 \frac{{\rm d}\log \alpha_t}{{\rm d}t} \beta_t^2. 
\end{equation}
The noise schedule $(\alpha_t, \beta_t)$ controls the diffusion rate through the conditional transition distribution
\begin{equation}\label{eq.forward.conditional}
    p_{\bZ_t | \bZ_0}(\bz_t | \bz_0) \deq \mN(\alpha_t \bz_0, \beta_t^2 \bI).
\end{equation}
By enforcing the signal-to-noise ratio (SNR) $\alpha_t^2 / \beta_t^2$ to decrease monotonically to zero as $t \to T$, 
the terminal state $\bZ_T$ converges to a pure Gaussian noise whose distribution is independent of the initial state $\bZ_0$.

Let $\bS(\bz_t, t) := \nabla_{\bz_t} \log p_{\bZ_t}(\bz_t)$ denote the score function of the forward SDE.  
Given this score, samples from the target distribution $p_{\bX}$ can be generated by solving the following \textit{reverse-time SDE} for $\cev{\bZ}_0$:
\begin{equation}\label{eq.reverse.sde}
    d\cev{\bZ}_t = \left[ b(t) \cev{\bZ}_t - \sigma^2(t)\, \bS(\cev{\bZ}_t, t)\right] dt + \sigma(t)\, d\cev{\bW}_t,
\end{equation}
where $\cev{\bW}_t$ is a Brownian motion running backward in time, and the terminal condition $\cev{\bZ}_T$ follows a Gaussian determined by the noise schedule. 
It has been shown that the forward and reverse processes share the same marginal distribution, i.e., $\cev{\bZ}_t \deq \bZ_t$ \cite{anderson1982reverse}.  
Consequently, solving the reverse SDE from $\cev{\bZ}_T$ to $\cev{\bZ}_0 \deq \bZ_0 \sim p_{\bX}$ produces samples from the target distribution. 
Hence, once the score function is known, the reverse SDE provides a principled mechanism for sampling from arbitrary target distributions.  
This mechanism forms the foundation of the EnSF, which leverages score functions to represent and propagate filtering distributions.

\subsubsection{Prior score estimation in EnSF}\label{sec.ensf.prior}
We now describe how the score-based diffusion model is applied to represent the prior filtering distribution $p_{\bX_{n+1} | \bY_{1:n}}$ in Eq.~\eqref{eq.filtering.prior.ck} in the prediction step.  
The corresponding forward diffusion process is defined as:
\begin{equation}\label{eq.prior.forward.sde}
    d\bZ_{n+1,t} = b(t)\, \bZ_{n+1,t}\, dt + \sigma(t)\, dW_t, \quad \bZ_{n+1,0} \deq \bX_{n+1} | \bY_{1:n}
\end{equation}
where the initial distribution $\bZ_{n+1,0}$ follow the prior filtering distribution $p_{\bX_{n+1} | \bY_{1:n}}$, and the subscript $(n+1)$ indicates that the process corresponds to the $(n+1)$-th assimilation step.  
Following \cite{bao2023scorebased, ensf_cmame}, we adopt the noise schedule $\alpha_t = 1 - t$, $\beta_t^2 = t$, and $T = 1$, which ensures that the terminal state $\bZ_{n+1,1}$ converges to a standard Gaussian distribution $\mathcal{N}(\bzero, \bI)$, independent of the initial condition. The score function of this forward SDE is defined as
\begin{equation}
    \bS_{n+1|n}(\bz_{n+1,t}, t) := 
    \nabla_{\bz_{n+1,t}} \log p_{\bZ_{n+1,t}}(\bz_{n+1,t}),
\end{equation}
which encode the generative power of the prior distribution $p_{\bX_{n+1} | \bY_{1:n}}$.  
By substituting this score function into the reverse SDE in Eq.~\eqref{eq.reverse.sde} and solving it backward in time from the standard Gaussian terminal condition, one can generate samples that approximate the prior filtering distribution.

Because the filtering distribution evolves dynamically over time, learning-based score estimation approaches (e.g., training a neural network (NN) to approximate the score function) are computationally prohibitive due to the need for frequent retraining at each assimilation step \cite{bao2023scorebased}.  
Instead, the EnSF method derives an explicit analytical expression for the prior score function:
\begin{equation}\label{eq.ensf.prior.derive}
\bS_{n+1|n}(\bz_{n+1,t}, t) = \int  - \frac{\bz_{n+1,t}- \alpha_t \bz_{n+1,0}}{\beta^2_t} w(\bz_{n+1,t},  \bz_{n+1,0})  p_{\bZ_{n+1,0}}(\bz_{n+1,0}) d \bz_{n+1,0},\\
\end{equation}
%
where the weight function $w(\bz_{n+1,t},  \bz_{n+1,0})$ is defined as
\begin{equation}\label{eq.ensf.weight}
w(\bz_{n+1,t},  \bz_{n+1,0}) := \frac{ p_{\bZ_{n+1,t}| \bZ_{n+1,0}}(\bz_{n+1,t} | \bz_{n+1,0}) }{\int p_{\bZ_{n+1,t}| \bZ_{n+1,0}}(\bz_{n+1,t} | \bz'_{n+1,0})  p_{\bZ_{n+1,0}}(
\bz'_{n+1,0}) d \bz'_{n+1,0}},
\end{equation}
satisfying satisfies the normalization condition $\int w(\bz_{n+1,t},  \bz_{n+1,0}) p_{\bZ_{n+1,0}}(\bz_{n+1,0}) d \bz_{n+1,0} = 1$. 
This formulation exploits the Gaussian property of 
$p_{\bZ_{n+1,t} | \bZ_{n+1,0}}(\bz_{n+1,t} | \bz_{n+1,0}) \sim \mN(\alpha_t \bZ_{n+1,0}, \beta_t^2 \bI_d)$ \cite{ensf_cmame}.  
Since the integral in Eq.~\eqref{eq.ensf.prior.derive} is taken with respect to the initial distribution $p_{\bZ_{n+1,0}}$, which is also the prior filtering distribution $p_{\bX_{n+1} | \bY_{1:n}}$, 
the prior score can be efficiently approximated using Monte Carlo integration based on the prior ensemble samples in Eq.~\eqref{eq.prior.sample}.

\subsubsection{Posterior score estimation in EnSF}\label{sec.ensf.post}
After receiving the new observation $\bY_{n+1}$, the prior score $\bS_{n+1|n}$ in Eq.~\eqref{eq.ensf.prior.derive} is updated by incorporating the likelihood information according to Bayes’ rule in Eq.~\eqref{eq.filtering.bayesian.update}, yielding an approximation of the posterior score function $\bS_{n+1|n+1}$.  
Specifically, EnSF combines the estimated prior score with the gradient of the log-likelihood (likelihood score) as follows:
\begin{equation}\label{eq:ensf.posterior.score}
    \bS_{n+1|1:n+1}(\bz,t) \;\approx\; \hat{\bS}_{n+1|1:n+1}(\bz,t) 
    = \hat{\bS}_{n+1|1:n}(\bz,t) + h(t)\,\nabla_{\bz} \log p_{\bY_{n+1} | \bX_{n+1}}(\by_{n+1} | \bz),
\end{equation}
where $\hat{\bS}_{n+1|n}$ denotes the Monte Carlo estimator of the prior score from Eq.~\eqref{eq.ensf.prior.derive},  
$p_{\bY_{n+1} | \bX_{n+1}}(\by_{n+1} | \cdot)$ is the likelihood function defined by the observation model in Eq.~\eqref{eq.filtering.obs},  
and $h: [0,1] \to \bbR$ is a continuous time-damping function that modulates the influence of the observation during the reverse SDE sampling process.  
In the current EnSF implementation, $h(t)$ is chosen as a monotonically decreasing function satisfying the boundary conditions $h(0)=1$ and $h(1)=0$ (for example, $h(t)=1-t$).  
This choice ensures that the contribution of the likelihood score is gradually introduced as the pseudo-time $t$ decreases from $1$ to $0$, allowing the reverse SDE to progressively transform the prior distribution into the posterior distribution.

Although EnSF has demonstrated encouraging results in high-dimensional nonlinear systems such as the Lorenz–96 and quasi-geostrophic models \cite{ensf_cmame, bao2025nonlinear, liang2025ensemble}, its current formulation for posterior score estimation in Eq.~\eqref{eq:ensf.posterior.score} suffers from two major limitations:

\begin{itemize}[leftmargin=30pt]\itemsep0.2cm
    \item[(\bf C1)] \textit{Structural errors in the posterior score estimator.}  
    The posterior score estimator $\hat{\bS}_{n+1|n+1}$ is only consistent with the true posterior score at $t=0$ and $t=1$, leading to structural bias for intermediate $t$ values.  
     {Because this bias lies in the posterior score expression itself rather than in the numerical solution of the reverse SDE, it does not vanish as the ensemble size grows or the reverse SDE solver is refined.}
    Moreover, the Monte Carlo approximation introduces additional stochastic error that compounds this structural bias, resulting in systematic deviation of the posterior ensemble $\mathcal{D}_{n+1}^{\rm posterior}$ from the true posterior distribution.
    \item[(\bf C2)] \textit{Insufficient updates for unobservable state components.}  
    When the observation operator $\mM$ involves only a subset of the state variables, the data likelihood score $\nabla_{\bz}\log p_{\bY_{n+1}|\bX_{n+1}}(\bz)$ vanishes along the unobserved directions.  
    As a result, the likelihood term provides no direct update for these components, leaving the corresponding posterior estimates largely unchanged from their prior values.
\end{itemize}

\section{The Iterative Ensemble Score Filter (IEnSF)}\label{sec.iensf}

In this section, we introduce the Iterative Ensemble Score Filter (IEnSF), developed to address challenges {\bf (C1)} and {\bf (C2)} identified in Section~\ref{sec.ensf.post}. The objective is to construct a more accurate posterior score approximation. We begin in Section~\ref{sec.exact.post.score} by deriving an exact expression for the posterior score under the assumption of a Gaussian mixture (GM) prior. Since this exact expression still contains intractable terms, we propose in Section~\ref{sec.approx.post.score} an iterative approximation scheme to make the posterior score computationally feasible. Finally, in Section~\ref{sec.iensf.sum}, we integrate these developments into the DA problem and provide a complete summary of the IEnSF framework for DA.

For notational simplicity, in this section, we derive the posterior score expression and its approximation by focusing on a single Bayesian update step. Accordingly, we drop the data assimilation time index $n$ and frame the Bayesian problem in a static setting. Specifically, we consider:
\begin{align}
    \text{Prior: } & \bX := \bX_{n+1} | \bY_{1:n}, \label{eq.bayes.static.prior} \\
    \text{Observation: } & \bY := \bY_{n+1} = \mM(\bX) + \bveps, \label{eq.bayes.static.obs}
\end{align}
where $\bveps \sim \mN(\bzero, \bSigma_{\text{obs}})$ and $\bX \sim \mD^{\text{prior}} = \{ \bx_k \}_{k=1}^K$ are the prior samples. Under this Gaussian observation noise, the likelihood is
\begin{equation}\label{eq.obs.likelihood}
p_{\bY | \bX}(\by | \bx) \propto \exp\left[ -\dfrac{1}{2} \big(\mM(\bx) - \by \big)^\top \bSigma_{\text{obs}}^{-1} \big(\mM(\bx) - \by \big) \right],
\end{equation}
with its score function being
\begin{equation}\label{eq.obs.likelihood.score}
    \bS_{\bY | \bX}(\by | \bx) = \nabla_{\bx} \log p_{\bY | \bX}(\by | \bx) 
    = -\nabla_{\bx} \mM(\bx)^\top \bSigma_{\text{obs}}^{-1} \big( \mM(\bx) - \by \big),
\end{equation}
where $\nabla_{\bx}\mM(\bx)$ denotes the Jacobian of the observation operator. 
 {Note that, unlike the EnKF and ETKF, which do not use gradient information, evaluating this likelihood score requires this Jacobian.}
By Bayes’ rule, the target posterior distribution is
\begin{equation}\label{eq.post.bayes}
    p_{\bX|\bY}(\bx|\by) \propto p_{\bX}(\bx) \, p_{\bY|\bX}(\by|\bx).
\end{equation}
Our objective is to construct the time-dependent posterior score needed for reverse-time SDE sampling, which will be discussed in the following subsections.

\subsection{The exact posterior score with a Gaussian mixture prior}\label{sec.exact.post.score}
We begin by deriving an exact posterior score expression under the assumption that the prior distribution $p_{\bX}(\bx)$ in Eq.~\eqref{eq.post.bayes} is Gaussian mixture model. Specifically, we have the following assumption. 
\begin{assumption}[The Gaussian mixture prior distribution]\label{assump.gmm}
    The prior distribution $p_{\bX}(\bx)$ of the Bayesian problem in Eq.~\eqref{eq.post.bayes} is modeled as a Gaussian mixture model, i.e.,
\begin{equation}\label{eq.gmm.prior.def}
    p_{\bX}(\bx) := \sum_{k=1}^K p_\xi(k) \, \phi(\bx; \bmu_k, \bSigma),
\end{equation}
where $p_{\bX|\xi}(\bx|k) := \phi(\bx; \bmu_k, \bSigma)$ is the PDF of the $k$-th Gaussian component, and $p_\xi(k) = 1/K$ is the (uniform) component weight associated with the latent variable $\xi \in \{1,\dots,K\}$.
\end{assumption}
%
This assumption is reasonable in data assimilation, as it can be interpreted as an ``inflated'' or ``perturbed'' form of the empirical distribution used in the EnKF. Each prior sample $\bx_k$ serves as the mean of a Gaussian component, while a covariance $\bSigma$ introduces local perturbations around each sample. Compared to the raw empirical distribution, which places all mass directly at each prior sample, the added covariance $\bSigma$ expands the support of the prior sample to the surroundings, helping to mitigate the particle degeneracy. The covariance $\bSigma$, typically estimated from the prior ensemble $\{\bx_k\}_{k=1}^K$, also encodes correlations between observed and unobserved variables, thereby enabling information from observations to propagate to unobserved states. 
However, introducing $\bSigma$ directly to each prior sample would artificially inflate the overall variance of the distribution.  
To address this, the Gaussian component means $\bmu_k$ are obtained by shrinking the original prior samples $\bx_k$ toward their ensemble mean $\Bar{\bx}$, with the amount of shrinkage determined by the added covariance $\bSigma$.  
Meanwhile, $\bSigma$ itself is chosen proportional to the sample covariance $\Bar{\bSigma}$ after applying standard localization and inflation.  
The construction is given by
\begin{equation}\label{eq.prior.gm.construct}
    \bmu_k = \sqrt{1-\gamma^2}\,(\bx_k - \Bar{\bx}) + \Bar{\bx}, 
    \qquad 
    \bSigma = \gamma\,\Bar{\bSigma},
\end{equation}
where $\gamma$ is the variance-splitting parameter that controls the Gaussianity of the constructed GM prior.  
Further details of this adjustment are provided in Appendix~\ref{append.gm.prior}.

\vspace{0.1cm}
\begin{remark}
    Note that the following derivation of the posterior score readily extends to a general GMM of the form $\sum_{k=1}^K \pi_k \, \phi(\bx; \bmu_k, \bSigma_k)$, where $\pi_k$ and $\bSigma_k$ vary across all components. For practical data assimilation, however, we restrict to uniform weights and a shared common covariance $\bSigma$. This choice is motivated by two considerations:  
1) prior samples $\{\bx_k\}_{k=1}^K$ are given unweighted, so uniform mixture weights are consistent with ensemble-based methods;  
2) estimating a separate covariance for each component is infeasible with limited ensemble sizes, so we instead use a shared covariance $\bSigma$, estimated from the prior sample covariance $\hat{\bSigma}$.  
\end{remark}
\vspace{0.1cm}

We now turn to deriving the posterior score for the diffusion-based sampling.  To proceed, we first define two processes: the prior forward SDE $\bZ_t$ and the posterior forward SDE $\bZ_t^{\bY}$, both evolving under the same forward diffusion dynamics in Eq.~\eqref{eq.forward.sde} but with different initial conditions: $\bZ_0 \deq \bX$, which is given in Eq.~\eqref{eq.gmm.prior.def} and $\bZ_0^{\bY} \deq \bX | \bY $, defined through the Bayes rule from Eq.~\eqref{eq.post.bayes}. The posterior score of our target, which drives the reverse SDE to generate the  {posterior} sample, is then given by 
\begin{equation}\label{eq.post.diff.score.def}
    \bS_{\bZ_t^{\bY}} (\bz_t, t, \by) = \nabla_z \log p_{\bZ_t^{\bY}}(\bz_t, \by)
\end{equation}
where $p_{\bZ_t^{\bY}}(\bz_t, \by)$ is the probability density of $\bZ_t^{\bY}$ under $\bY=\by$.

Since both $\bZ_t$ and $\bZ_t^{\bY}$ follow the same forward diffusion dynamics, they will share the same transition kernel:
\begin{equation}
    p_{\bZ_t | \bZ_0} (\bz_t | \bz_0) \equiv p_{\bZ_t^{\bY} | \bZ_0^{\bY}}(\bz_t | \bz_0).
\end{equation}
As a result, the density of the posterior process $p_{\bZ_t^{\bY}}(\bz_t, \by)$ can be expressed by conditioning the observation $\bY$ directly on the prior diffusion process $\bZ_t$:
\begin{equation}\label{eq:zzt}
\begin{aligned}
      p_{\bZ_t^{\bY}}(\bz_t, \by) 
      & = \int q_{\bZ_t^{\bY} | \bZ^{\bY}_0}(\bz_t | \bz_0)  p_{\bZ^{\bY}_0} (\bz_0, \by) d \bz_0 
      = \int p_{\bZ_{t} | \bZ_{0} }(\bz_t | \bz_0)  p_{\bX | \bY}(\bz_0 | \by) d\bz_0\\
      & = \int p_{\bZ_{t} | \bZ_{0}, \bY}(\bz_t | \bz_0)  p_{\bZ_0 | \bY} (\bz_0 | \by) d\bz_0 = p_{\bZ_t | \bY}(\bz_t, \by).\\
\end{aligned}
\end{equation}
Thus, the target posterior score for $\bZ^{\bY}_t$ can be written as
\begin{equation}\label{eq:target-score-def}
    S_{\bZ^{\bY}_t}(\bz_t | \by) = S_{\bZ_t|\bY}(\bz_t | \by)  := \nabla_{\bz_t} \log p_{\bZ_t|\bY}(\bz_t) = \frac{\nabla_{\bz_t} p_{\bZ_t|\bY}(\bz_t|\by)}{p_{\bZ_t|\bY}(\bz_t |\by)}.
\end{equation}
Compared with directly working with $\bZ^{\bY}_t$, which is difficult since the initial posterior distribution $\bZ_0^{\bY} \deq \bX | \bY $ is the unknown posterior, the representation in terms of $p_{\bZ_t | \bY}(\bz_t | \by)$ is advantageous. It enables the use of Bayes’ rule together with the analytical properties of the prior process $\bZ_t$, which is explicitly tractable under Assumption~\ref{assump.gmm}. This tractability is critical to deriving the exact posterior score expression.

The following proposition characterizes the transition densities of the prior diffusion process $\bZ_t$ under Assumption \ref{assump.gmm}, which will be used to derive the closed form of the posterior score.

\begin{proposition}[The forward and reverse transitional probabilities]\label{prop.prior.trans.density}
Let $\bZ_t$ follow the forward SDE from Eq.~\eqref{eq.forward.sde}. If the prior distribution is a GM distribution
\begin{equation}
    p_{\bZ_0}(\bz_0) = \sum_k  \pi_k \, \phi(\bz_0 ; \bmu_k, \bSigma),
\end{equation}
then we have the following properties:
\begin{enumerate}
\item The marginal density at time $t$ is a Gaussian mixture defined by
\begin{equation}\label{eq.gmm.forward.marginal}
    p_{\bZ_t} (\bz_t) = \sum_k p_{\xi}(k)\, \phi(\bz_t; \bmu_{t,k}, \bSigma_{t}),
\end{equation}
where each component evolves as
\begin{align}
    \bmu_{t,k} &= \alpha_t \bmu_k, \label{eq.gmm.forward.mean}\\
    \bSigma_{t} &= \alpha_t^2 \bSigma + \beta_t^2 \bI. \label{eq.gmm.forward.cov}
\end{align}
\item The reverse transition distribution is also a Gaussian mixture:
\begin{equation}\label{eq.gmm.reverse.kernel}
    p_{\bZ_0 | \bZ_t}(\bz_0 | \bz_t) = \sum_k p_{\xi | \bZ_t}(k | \bz_t) \, p_{\bZ_0 | \bZ_t, \xi}(\bz_0 | \bz_t, k),
\end{equation}
where $p_{\bZ_0 | \bZ_t, \xi}(\bz_0 | \bz_t, k) \equiv \phi(\bz_0; \bmu_{0|t,k}(\bz_t), \bSigma_{0|t,k})$ is a Gaussian with 
\begin{align}
    \bmu_{0|t,k}(\bz_t) &= \bmu_k + \alpha_t \bSigma \left( \alpha_t^2 \bSigma + \beta_t^2 \bI \right)^{-1} (\bz_t - \alpha_t \bmu_k), \label{eq.gmm.reverse.mean}\\
    \bSigma_{0|t} &= \bSigma - \alpha_t^2 \bSigma \left( \alpha_t^2 \bSigma + \beta_t^2 \bI \right)^{-1} \bSigma.\label{eq.gmm.reverse.cov}
\end{align}
and the mixture weights are
\begin{equation}\label{eq.gmm.reverse.weight}
    p_{\xi | \bZ_t}(k | \bz_t) = \frac{\pi_k \, \phi(\bz_t; \bmu_{t,k}, \bSigma_{t})}{\sum_j \pi_j \, \phi(\bz_t; \bmu_{t,j}, \bSigma_{t})}.
\end{equation}
\end{enumerate}
 {The proof is given in~\ref{app.prop.derivation}.}
\end{proposition}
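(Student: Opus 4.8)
The plan is to prove both claims directly from the linear--Gaussian structure of the forward transition kernel $p_{\bZ_t|\bZ_0}(\bz_t|\bz_0)=\mN(\alpha_t\bz_0,\beta_t^2\bI)$ in Eq.~\eqref{eq.forward.conditional}, combined with the mixture structure of the prior. Part~(1) reduces to marginalizing the joint density over $\bz_0$ component by component, and Part~(2) reduces to an application of Bayes' rule together with the standard Gaussian conditioning formula, after introducing the latent component index $\xi$.

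For Part~(1), I would write the marginal as $p_{\bZ_t}(\bz_t)=\int p_{\bZ_t|\bZ_0}(\bz_t|\bz_0)\,p_{\bZ_0}(\bz_0)\,d\bz_0$ and substitute the mixture prior, interchanging the finite sum with the integral to obtain $\sum_k \pi_k \int \phi(\bz_t;\alpha_t\bz_0,\beta_t^2\bI)\,\phi(\bz_0;\bmu_k,\bSigma)\,d\bz_0$. Each summand is the marginal of $\bz_t$ in the linear--Gaussian model $\bz_0\sim\mN(\bmu_k,\bSigma)$, $\bz_t=\alpha_t\bz_0+\beta_t\bveps$ with $\bveps\sim\mN(\bzero,\bI)$ independent of $\bz_0$. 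Its mean is $\alpha_t\bmu_k$ and its covariance is $\alpha_t^2\bSigma+\beta_t^2\bI$, so each integral evaluates to $\phi(\bz_t;\alpha_t\bmu_k,\alpha_t^2\bSigma+\beta_t^2\bI)$, which establishes Eqs.~\eqref{eq.gmm.forward.marginal}--\eqref{eq.gmm.forward.cov}.

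For Part~(2), I would first apply the law of total probability over the latent index to write $p_{\bZ_0|\bZ_t}(\bz_0|\bz_t)=\sum_k p_{\xi|\bZ_t}(k|\bz_t)\,p_{\bZ_0|\bZ_t,\xi}(\bz_0|\bz_t,k)$, which is already the claimed mixture form. The weight follows from Bayes' rule applied to the latent variable, $p_{\xi|\bZ_t}(k|\bz_t)=\pi_k\,p_{\bZ_t|\xi}(\bz_t|k)/\sum_j \pi_j\,p_{\bZ_t|\xi}(\bz_t|j)$, where $p_{\bZ_t|\xi}(\bz_t|k)=\phi(\bz_t;\bmu_{t,k},\bSigma_t)$ is exactly the single-component marginal computed in Part~(1); this yields Eq.~\eqref{eq.gmm.reverse.weight}. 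For the conditional density within component $k$, I would note that $(\bz_0,\bz_t)$ is jointly Gaussian given $\xi=k$, with marginal means $\bmu_k$ and $\alpha_t\bmu_k$, marginal covariances $\bSigma$ and $\bSigma_t$, and cross-covariance $\mathrm{Cov}(\bz_0,\bz_t\mid\xi=k)=\alpha_t\bSigma$ obtained from $\bz_t=\alpha_t\bz_0+\beta_t\bveps$. Applying the Gaussian conditioning formula then gives conditional mean $\bmu_k+\alpha_t\bSigma(\alpha_t^2\bSigma+\beta_t^2\bI)^{-1}(\bz_t-\alpha_t\bmu_k)$ and conditional covariance $\bSigma-\alpha_t^2\bSigma(\alpha_t^2\bSigma+\beta_t^2\bI)^{-1}\bSigma$, matching Eqs.~\eqref{eq.gmm.reverse.mean}--\eqref{eq.gmm.reverse.cov}.

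There is no deep obstacle here, since the result is essentially bookkeeping around two standard Gaussian identities and the care required is entirely in the setup. The one point worth flagging is that the reverse conditional covariance $\bSigma_{0|t}$ carries no $k$-dependence even though the conditional mean $\bmu_{0|t,k}(\bz_t)$ does: this is a direct consequence of Assumption~\ref{assump.gmm} that all mixture components share the common covariance $\bSigma$, so the within-component cross-covariance $\alpha_t\bSigma$ and marginal covariance $\bSigma_t$ are identical across $k$ and the conditioning correction eliminates the $k$-dependence from the covariance. I would make sure the cross-covariance is derived cleanly from the representation $\bz_t=\alpha_t\bz_0+\beta_t\bveps$ with $\bveps\perp\bz_0$, since a stray factor of $\alpha_t$ there is the only realistic way the computation could go wrong.
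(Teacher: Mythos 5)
Your proposal is correct and follows essentially the same route as the paper's proof: the paper likewise reduces everything to the linear--Gaussian representation $\bZ_t \deq \alpha_t \bZ_0 + \beta_t \bveps$ (its Lemma on the forward SDE solution), handles the single-Gaussian case via joint-Gaussian conditioning (its single-Gaussian transitional kernel lemma), and then assembles the mixture result by conditioning on the latent index $\xi$ with Bayes' rule for the weights. Your observation that the $k$-independence of $\bSigma_{0|t}$ stems from the shared component covariance is accurate and consistent with the paper's treatment.
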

\begin{proposition}[The prior score function ]\label{prop.gmm.prior.score}
Under Assumption~\ref{assump.gmm}, the score of the prior distribution at time $t$ is
\begin{equation}\label{eq.gmm.prior.score}
    \bS_{\bZ_t}(\bz_t) = \nabla_{\bz_t} \log p_{\bZ_t}(\bz_t) 
    = \sum_k p_{\xi | \bZ_t}(k | \bz_t) \, \bS_{\bZ_t | k}(\bz_t),
\end{equation}
where $\bmu_{t,k}$ and $\bSigma_{t}$ are given in Eq.\eqref{eq.gmm.forward.mean} and Eq.\eqref{eq.gmm.forward.cov}, $p_{\xi | \bZ_t}(k | \bz_t)$ is the prior mixture weight given by Eq.\eqref{eq.gmm.reverse.weight} and $\bS_{\bZ_t | k}(\bz_t)$ is the k-th Gaussian component score given by
\begin{equation}\label{eq.gmm.prior.score.component}
    \bS_{\bZ_t | k}(\bz_t) = -\bSigma_{t}^{-1} (\bz_t - \bmu_{t,k}).
\end{equation}
 {The proof is given in~\ref{app.prop.derivation}.}
\end{proposition}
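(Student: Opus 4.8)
The plan is to compute the score directly from the closed-form marginal density supplied by Proposition~\ref{prop.prior.trans.density}, using nothing more than the log-derivative identity for Gaussians. First I would invoke Eq.~\eqref{eq.gmm.forward.marginal}, which tells us that $p_{\bZ_t}(\bz_t) = \sum_k p_\xi(k)\,\phi(\bz_t;\bmu_{t,k},\bSigma_t)$ is itself a Gaussian mixture with component means $\bmu_{t,k} = \alpha_t\bmu_k$ and shared covariance $\bSigma_t = \alpha_t^2\bSigma + \beta_t^2\bI$. Writing the score as $\bS_{\bZ_t}(\bz_t) = \nabla_{\bz_t} p_{\bZ_t}(\bz_t)/p_{\bZ_t}(\bz_t)$, I would differentiate the mixture termwise and push the gradient onto each Gaussian factor.

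The key computational step is the single-component identity $\nabla_{\bz_t}\phi(\bz_t;\bmu_{t,k},\bSigma_t) = \phi(\bz_t;\bmu_{t,k},\bSigma_t)\,\bigl(-\bSigma_t^{-1}(\bz_t - \bmu_{t,k})\bigr)$, which follows immediately from differentiating the Gaussian exponent; the right-hand bracket is exactly the component score $\bS_{\bZ_t|k}(\bz_t)$ in Eq.~\eqref{eq.gmm.prior.score.component}. Substituting this into the termwise gradient gives
\begin{equation*}
  \nabla_{\bz_t} p_{\bZ_t}(\bz_t) = \sum_k p_\xi(k)\,\phi(\bz_t;\bmu_{t,k},\bSigma_t)\,\bS_{\bZ_t|k}(\bz_t),
\end{equation*}
so that dividing by $p_{\bZ_t}(\bz_t) = \sum_j p_\xi(j)\,\phi(\bz_t;\bmu_{t,j},\bSigma_t)$ produces a convex combination of the component scores whose weights are precisely $p_\xi(k)\,\phi(\bz_t;\bmu_{t,k},\bSigma_t)\big/\sum_j p_\xi(j)\,\phi(\bz_t;\bmu_{t,j},\bSigma_t)$.

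The final step is purely a matter of recognition: this emergent weight coincides verbatim with the responsibility $p_{\xi|\bZ_t}(k|\bz_t)$ defined in Eq.~\eqref{eq.gmm.reverse.weight} of Proposition~\ref{prop.prior.trans.density} (identifying $\pi_k$ with $p_\xi(k)$), which is just the Bayes posterior over the latent component index $\xi$ given $\bZ_t = \bz_t$. Matching the two expressions yields $\bS_{\bZ_t}(\bz_t) = \sum_k p_{\xi|\bZ_t}(k|\bz_t)\,\bS_{\bZ_t|k}(\bz_t)$, as claimed. I do not anticipate a genuine obstacle here: the argument is a direct application of the log-derivative trick, and its only subtlety is the bookkeeping that confirms the normalized score weight is the \emph{same} responsibility already named in Proposition~\ref{prop.prior.trans.density}, which guarantees internal consistency between the reverse-kernel weights and the score weights. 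The one place to be mildly careful is that the mixture form of the marginal in Eq.~\eqref{eq.gmm.forward.marginal} must be in hand first; since it is an already-established prior result, I would simply cite it rather than rederive the forward evolution of $\bmu_{t,k}$ and $\bSigma_t$.
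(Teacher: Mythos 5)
Your proposal is correct and follows essentially the same route as the paper's own proof: start from the Gaussian-mixture marginal in Proposition~\ref{prop.prior.trans.density}, differentiate termwise using the Gaussian log-derivative identity $\nabla_{\bz_t}\phi = \phi\,\bS_{\bZ_t|k}$, and recognize the normalized weights as the Bayes responsibilities $p_{\xi|\bZ_t}(k|\bz_t)$ of Eq.~\eqref{eq.gmm.reverse.weight}. The only cosmetic difference is that the paper routes the component-score step through its standalone Lemmas on exponentiated-function gradients and Gaussian scores, whereas you differentiate the Gaussian exponent directly—the computation is identical.
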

%


This prior score function generalizes the EnSF prior score in Eq.~\eqref{eq.ensf.prior.derive}. Specifically, EnSF prior score in Eq.~\eqref{eq.ensf.prior.derive} can be recovered as the degenerate case of $\bSigma=\bzero$ where the mixture collapses to delta functions centered at ensemble members. 
In addition, the presence of $\bSigma$ introduces anisotropic correlations within the prior component score terms $\bS_{\bZ_t | k}(\bz_t)$, which helps the update of unobserved variables through the covariance structure in the reverse SDE. Based on the above two propositions, we have the following theorem on the posterior score function.

\begin{theorem}[The exact posterior score]\label{thm.post.score.expression}
Let $\bZ_t$ evolve under the forward SDE in Eq.~\eqref{eq.forward.sde} with initial distribution $p_{\bZ_0}(\bz_0) = \sum_k \pi_k \,\phi(\bz_0;\bmu_k,\bSigma)$, and let $\bY$ be defined by Eq.~\eqref{eq.bayes.static.obs}. Then the posterior score is
\begin{equation}\label{eq.post.score.expression.final}
    \bS_{\bZ_t | \bY}(\bz_t | \by) 
    = \sum_k p_{\xi | \bZ_t, \bY}(k | \bz_t, \by) \, \bS_{\bZ_t | k}(\bz_t) 
    + \bJ(t) \, \bbE[\bS_{\bY|\bX}(\by | \bZ_0) | \bZ_t = \bz_t, \bY = \by],
\end{equation}
\vspace{-0.2cm}
where
\vspace{-0.0cm}
\begin{itemize}
    \item $p_{\xi|\bZ_t,\bY}(k|\bz_t,\by)$ are posterior mixture weights;
    \item $\bS_{\bZ_t|k}(\bz_t)$ is prior component score given in Eq.~\eqref{eq.gmm.prior.score.component}; 
    \item $\bJ(t)$ is the Jacobian of the mean of reverse-time transition kernel in Eq.~\eqref{eq.gmm.reverse.mean};
    \item $\bS_{\bY|\bX}(\by | \cdot)$ is the likelihood score given in Eq.~\eqref{eq.obs.likelihood.score};
    \item $\bbE[\bS_{\bY|\bX}(\by | \bZ_0) | \bZ_t = \bz_t, \bY = \by]$ is the expected likelihood score at time zero given the observation and diffusion state. 
\end{itemize}
Specifically, the expression of $\bJ(t)$ is given as
\begin{equation}\label{eq.jt.expression}
    \bJ(t) = \alpha_t \bSigma \left( \alpha_t^2 \bSigma + \beta_t^2 \bI_d \right)^{-1}.
\end{equation}
 {The proof is given in~\ref{app.post.score.derivation}.}
\end{theorem}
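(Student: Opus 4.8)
The plan is to start from the identity $p_{\bZ_t^{\bY}}(\bz_t,\by)=p_{\bZ_t|\bY}(\bz_t|\by)$ established in Eq.~\eqref{eq:zzt}, write the posterior density in diffusion space in closed form using Bayes' rule and the Gaussian-mixture prior, and then differentiate $\log p_{\bZ_t|\bY}$ directly. Inserting $p_{\bZ_0|\bY}(\bz_0|\by)\propto p_{\bY|\bX}(\by|\bz_0)\,p_{\bZ_0}(\bz_0)$ into the representation of Eq.~\eqref{eq:zzt}, and using the per-component Bayes factorization $p_{\bZ_t|\bZ_0}(\bz_t|\bz_0)\,\phi(\bz_0;\bmu_k,\bSigma)=\phi(\bz_t;\bmu_{t,k},\bSigma_t)\,\phi(\bz_0;\bmu_{0|t,k}(\bz_t),\bSigma_{0|t})$ coming from Proposition~\ref{prop.prior.trans.density}, I would obtain
\begin{equation*}
 p_{\bZ_t|\bY}(\bz_t|\by)\;\propto\;\sum_k \pi_k\,\phi(\bz_t;\bmu_{t,k},\bSigma_t)\,L_k(\bz_t,\by),\qquad L_k(\bz_t,\by):=\int p_{\bY|\bX}(\by|\bz_0)\,\phi(\bz_0;\bmu_{0|t,k}(\bz_t),\bSigma_{0|t})\,d\bz_0,
\end{equation*}
where the $\bz_t$-independent factor $1/p_{\bY}(\by)$ drops out upon taking $\nabla_{\bz_t}\log$. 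The quantity $L_k$ is simply the likelihood averaged against the $k$-th reverse kernel.

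Differentiating, I would use $\nabla_{\bz_t}\log(\sum_k w_k)=\sum_k (w_k/\sum_j w_j)\,\nabla_{\bz_t}\log w_k$ with $w_k:=\pi_k\,\phi(\bz_t;\bmu_{t,k},\bSigma_t)\,L_k$. The normalized weights $w_k/\sum_j w_j$ are exactly the posterior mixture weights $p_{\xi|\bZ_t,\bY}(k|\bz_t,\by)$, and $\nabla_{\bz_t}\log w_k$ splits into $\nabla_{\bz_t}\log\phi(\bz_t;\bmu_{t,k},\bSigma_t)=-\bSigma_t^{-1}(\bz_t-\bmu_{t,k})=\bS_{\bZ_t|k}(\bz_t)$, the prior component score of Eq.~\eqref{eq.gmm.prior.score.component}, plus $\nabla_{\bz_t}\log L_k$. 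Summed against the posterior weights, the first piece already reproduces the first term of Eq.~\eqref{eq.post.score.expression.final}, so it remains only to show $\sum_k p_{\xi|\bZ_t,\bY}(k|\bz_t,\by)\,\nabla_{\bz_t}\log L_k=\bJ(t)\,\bbE[\bS_{\bY|\bX}(\by|\bZ_0)\mid \bZ_t=\bz_t,\bY=\by]$.

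The crux is evaluating $\nabla_{\bz_t}\log L_k$. The $\bz_t$-dependence of $L_k$ enters only through the reverse-kernel mean $\bmu_{0|t,k}(\bz_t)$ of Eq.~\eqref{eq.gmm.reverse.mean}, whose Jacobian $\partial\bmu_{0|t,k}/\partial\bz_t=\alpha_t\bSigma(\alpha_t^2\bSigma+\beta_t^2\bI_d)^{-1}=\bJ(t)$ is symmetric and, crucially, identical for every component $k$. Applying the chain rule together with the Gaussian identity $\nabla_{\bmu}\phi(\bz_0;\bmu,\bSigma_{0|t})=\bSigma_{0|t}^{-1}(\bz_0-\bmu)\,\phi=-\nabla_{\bz_0}\phi$, I would differentiate under the integral sign and then integrate by parts in $\bz_0$ to move the derivative onto the likelihood, converting $\bSigma_{0|t}^{-1}(\bz_0-\bmu_{0|t,k})$ into $\nabla_{\bz_0}\log p_{\bY|\bX}(\by|\bz_0)=\bS_{\bY|\bX}(\by|\bz_0)$. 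This yields $\nabla_{\bz_t}\log L_k=\bJ(t)\,\bbE[\bS_{\bY|\bX}(\by|\bZ_0)\mid\bZ_t=\bz_t,\xi=k,\bY=\by]$, the expectation being taken under the component-wise posterior reverse kernel $\propto p_{\bY|\bX}(\by|\bz_0)\,\phi(\bz_0;\bmu_{0|t,k},\bSigma_{0|t})$ (normalizer $L_k$). Because $\bJ(t)$ is independent of $k$, it factors out of the mixture, and collapsing the component average by the tower property over $\xi$ with the posterior weights $p_{\xi|\bZ_t,\bY}$ produces the unconditional $\bbE[\bS_{\bY|\bX}(\by|\bZ_0)\mid\bZ_t=\bz_t,\bY=\by]$, completing the identity.

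I expect the integration-by-parts step to be the main obstacle, since it is where the component-dependent Gaussian score is traded for the genuine likelihood score and it requires the boundary terms to vanish. This holds under mild regularity on $\mM$ (for example, continuously differentiable with at most polynomial growth): $p_{\bY|\bX}(\by|\bz_0)\le 1$ is bounded and the Gaussian weight $\phi(\bz_0;\bmu_{0|t,k},\bSigma_{0|t})$ decays super-polynomially, which also justifies differentiation under the integral by domination. The remaining routine checks are the symmetry and $k$-independence of $\bJ(t)$ (so it can be pulled outside the mixture) and the bookkeeping ensuring the mixing weights in the tower-property step are precisely the posterior weights $p_{\xi|\bZ_t,\bY}$.
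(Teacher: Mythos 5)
Your proposal is correct, and its skeleton coincides with the paper's own derivation: the same Bayes/mixture decomposition over the latent component $\xi$ (your weights $w_k=\pi_k\,\phi(\bz_t;\bmu_{t,k},\bSigma_t)\,L_k$ are exactly the paper's $p_{\bZ_t|\xi}(\bz_t|k)\,p_{\bY|\bZ_t,\xi}(\by|\bz_t,k)\,p_\xi(k)$, so your normalized weights are the posterior responsibilities $p_{\xi|\bZ_t,\bY}$), the same split into a prior-score term plus a smoothed-likelihood term, and the same final collapse by the tower property over $\xi$. The only genuine difference is the device used at the crux step $\nabla_{\bz_t}\log L_k$: you differentiate the reverse Gaussian kernel with respect to its mean, use $\nabla_{\bmu}\phi(\bz_0;\bmu,\bSigma_{0|t})=-\nabla_{\bz_0}\phi(\bz_0;\bmu,\bSigma_{0|t})$, and integrate by parts in $\bz_0$ to transfer the derivative onto the likelihood, whereas the paper uses the reparameterization $(\bZ_0\mid\bZ_t,\xi=k)\deq\bmu_{0|t,k}(\bz_t)+\bL_{0|t}\bveps$, differentiates under the expectation over $\bveps$, and lets the chain rule place the derivative directly on $p_{\bY|\bX}(\by|\cdot)$ via the identity $\nabla p = p\,\bS$. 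The two devices are equivalent Gaussian-integral manipulations and impose essentially the same regularity: your requirement that boundary terms vanish (bounded likelihood times Gaussian decay) is the mirror image of the paper's implicit interchange of gradient and expectation. Your version arguably makes the regularity hypotheses more explicit; the paper's version avoids integration by parts and hence any discussion of boundary terms. Your observations that $\bJ(t)$ is symmetric and component-independent, so it factors out of the mixture, are also exactly what the paper's computation of $I_2$ relies on.
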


Similar to the standard practice of conditional sampling in diffusion models, where the learned prior score is combined with a time-scaled likelihood score term in the reverse SDE, our posterior score likewise decomposes into two terms, i.e., the prior score $\sum_k p_{\xi | \bZ_t,\bY}(k | \bz_t,\by)\,\bS_{\bZ_t | k}(\bz_t)$ and the likelihood guidance term $\bJ(t)\,\bbE[\bS_{\bY | \bX}(\by | \bZ_0) | \bZ_t=\bz_t,\bY=\by]$.  {We note that $\alpha_t$ and $\beta_t$ here are the fixed diffusion noise schedule from
Section~\ref{sec.background}, not tunable parameters of the method.} Compared with the standard formulation, our derivation yields several key insights:
\begin{itemize}[leftmargin=20pt]
    \item {\em Observation-dependent prior score.} Under the GM prior, the observation $\bY$ enters not only through the likelihood term but also through the mixture weights $p_{\xi | \bZ_t,\bY}(k | \bz_t,\by)$ of the prior score. In contrast, in standard conditional diffusion models, the dependence on $\bY$ only appears in the likelihood guidance term. This additional dependence provides a new channel by which observations $\bY$ influence posterior sampling. Note that this $\bY$-dependency is gone when we only use a single Gaussian prior. 
    
    \item {\em Explicit likelihood scaling.} The likelihood score is scaled by a time-dependent factor $\bJ(t)$ in Eq.~\eqref{eq.jt.expression}, which depends explicitly on the prior covariance $\bSigma$ and the diffusion schedule $\alpha_t$ and $\beta_t$. This eliminates the need for an ad hoc tunable guidance strength and ensures that the scaling adapts automatically to the ensemble covariance structure.
    
    \item {\em Exact likelihood score expression.} Instead of relying on a single-point evaluation of $\bS_{\bY | \bX}(\by | \cdot)$ during posterior sampling, as is common in conditional diffusion models, our result shows that the correct likelihood score is given by the expectation of $\bS_{\bY | \bX}(\by | \cdot)$ with respect to the distribution $p_{\bZ_0 | \bZ_t,\bY}(\bz_0 | \bz_t,\by)$. Although this distribution is generally intractable, the exact formulation provides theoretical guidance for constructing practical approximations with reduced error, as developed in Section~\ref{sec.approx.cond.exp}, and for assessing their accuracy.
\end{itemize}

 {In high dimensions, the covariance $\alpha_t^2 \bSigma + \beta_t^2 \bI_d$ arising in the
reverse-time SDE is not inverted explicitly at each step. Since it shares the eigenvectors of
the localized prior covariance $\bSigma$ for all $t$, a single eigendecomposition of $\bSigma$ per
assimilation cycle suffices, and each reverse-SDE step then applies the inverse in $O(d^2)$ by
shifting eigenvalues, giving a dominant per-cycle cost of one $O(d^3)$ eigendecomposition. For
dimensions at which a dense covariance cannot be formed, this global treatment is replaced by
domain localization, as in the LETKF, decomposing the analysis into independent local
subproblems of dimension $d_{\mathrm{local}} \ll d$ and reducing the cost to
$O(d_{\mathrm{local}}^3)$ per patch. A complementary strategy for scaling the sampling step is
patch-wise (tiled) denoising, as used in recent diffusion-model work \cite{multidiffusion},
where a large field is decomposed into overlapping patches denoised in parallel and fused in
their overlaps; adapting this so that the decomposition preserves the covariance-mediated update
of unobserved variables is left to future work.}

\subsection{Approximation of the posterior score function}\label{sec.approx.post.score}
In this section, we develop approximations for the exact posterior score in Eq.~\eqref{eq.post.score.expression.final}. For clarity, we first rewrite it in the simplified form
\begin{equation}\label{eq.post.score.simple}
    \bS_{\bZ_t|\bY} (\bz_t, \by)
    = \sum_k w^{post}(k,\bz_t,\by)\, \bS_{\bZ_t | k}(\bz_t) 
    + \bJ(t)\,\bmu^{\bS_{\bY | \bX}}(\bz_t, \by),
\end{equation}
where $\bJ(t)$ and $\bS_{\bZ_t | k}(\bz_t)$ admit closed-form expressions (Eq.~\eqref{eq.jt.expression} and Eq.~\eqref{eq.gmm.prior.score.component}). The remaining challenge is to approximate the following two intractable terms:
\begin{align}
    \text{Posterior mixture weights: }& w^{post}(k,\bz_t,\by)  \equiv p_{\xi|\bZ_t,\bY}(k|\bz_t,\by), \label{eq.w.post.def} \\
    \text{Conditional expectation: }& \bmu^{\bS_{\bY | \bX}} (\bz_t,\by)
     \equiv \mathbb{E}\!\left[\bS_{\bY|\bX}(\by | \bZ_0)\,\big|\, \bZ_t=\bz_t,\;\bY=\by\right]\label{eq.s.mus.def}.
\end{align}
Our strategy is as follows.  
\begin{itemize}[leftmargin=15pt]
    \item {\em Approximating the posterior mixture weights.} In Section~\ref{sec.approx.post.weight}, we approximate the posterior weights $w^{post}(k,\bz_t,\by)$, with the key step being the introduction of the $\bY$-dependence via a tractable linearization of the observation operator.  
    \item {\em Approximating the conditional expectation with a single-point evaluation.} In Section~\ref{sec.approx.cond.exp}, we approximate the conditional expectation $\bmu^{\bS_{\bY | \bX}} (\bz_t,\by)$ by replacing the full expectation with a single evaluation $\bS_{\bY|\bX}(\by | \Bar{\bmu}_0^*(\bz_t))$, where the evaluation point $\Bar{\bmu}_0^*(\bz_t)$ is obtained through a Kalman-type update.  
    \item {\em Iteratively refining the evaluation point of the likelihood score.} Finally, since the true $\Bar{\bmu}_0^*(\bz_t)$ depends on the unknown posterior $p_{\bX|\bY}$, we propose in Section~\ref{sec.iterative.refinement} an iterative refinement scheme that repeatedly solves the reverse SDE to progressively improve the posterior information used in $\Bar{\bmu}_0^*(\bz_t)$. 
\end{itemize}
 {
Throughout this section, both the observation dependence introduced into the prior-score weights and the evaluation point of the likelihood score are exact quantities, following from Theorem~\ref{thm.post.score.expression}; the local linearization and the Kalman-type update are used only as tractable approximations of these exact quantities, and do not constrain the posterior, which is generated by the reverse SDE and can remain non-Gaussian, as discussed in Remarks~\ref{remark.m.linearization} and~\ref{remark.gaussian.aprox}.
}

\subsubsection{Approximation of the posterior mixture weight $w^{post}(k, \bz_t, \by)$}\label{sec.approx.post.weight}

We begin by approximating the posterior mixture weight $w^{post}(k,\bz_t,\by)$. The main challenge arises from the nonlinear observation operator $\mM(\cdot)$, which renders the Gaussian integral intractable. To overcome this, we apply a local linearization of $\mM(\cdot)$, which makes the integral analytically solvable. It is important to note that this linearization only modifies how $\bY$ enters the weighting function, and does not impose restrictions on the form of the final posterior distribution (see Remark~\ref{remark.m.linearization}).

Applying Bayes’ rule, we first decompose $w^{post}(k,\bz_t,\by) = p_{\xi | \bZ_t,\bY}(k | \bz_t,\by)$ as
\begin{equation}\label{eq.w.post.bayes.decom.p}
    w^{post}(k,\bz_t,\by) \propto p_{\xi | \bZ_t}(k | \bz_t)\, p_{\bY | \bZ_t, \xi}(\by | \bz_t , k),
\end{equation}
where the proportionality follows from the marginalization $\sum_k p_{\xi | \bZ_t,\bY}(k | \bz_t,\by) = 1$. For clarity, we rewrite this decomposition in Eq.~\eqref{eq.w.post.bayes.decom.p} as
\begin{equation}\label{eq.w.post.bayes.decom.rewrite}
    w^{post}(k,\bz_t,\by) \propto w^{prior}(k,\bz_t)\, w^{obs}(k,\by,\bz_t),
\end{equation}
where $w^{prior}(k,\bz_t) := p_{\xi | \bZ_t}(k | \bz_t)$ is the prior weighting term appearing in the prior score from  Eq.~\eqref{eq.gmm.prior.score}, with explicit form given in Eq.~\eqref{eq.gmm.reverse.weight}, and and 
\begin{equation}\label{eq.w.obs.def}
    w^{obs}(k,\by,\bz_t) := p_{\bY | \bZ_t, \xi}(\by | \bz_t , k)
\end{equation}
is the observation-dependent correction factor.

To compute $w^{obs}(k,\by,\bz_t) := p_{\bY | \bZ_t, \xi}(\by | \bz_t , k)$, we introduce $\bZ_0$ as an intermediate variable and write it as an integral with respect to $\bZ_0$:
\begin{align}
w^{obs}(k, \by, \bz_t) 
&= \int p_{\bY | \bZ_0, \bZ_t, \xi}(\by | \bz_0, \bz_t, k) \; p_{\bZ_0 | \bZ_t, \xi}(\bz_0 | \bz_t, k) \; d \bz_0\\
&= \int p_{\bY | \bZ_0}(\by | \bz_0) \; p_{\bZ_0 | \bZ_t, \xi}(\bz_0 | \bz_t, k) \; d\bz_0, \label{eq.w.obs.integral}
\end{align}
where $p_{\bY | \bZ_0}(\cdot)$ is the likelihood function defined in Eq.~\eqref{eq.obs.likelihood}, and $p_{\bZ_0 | \bZ_t, \xi}(\bz_0 | \bz_t) = \phi(\bz_0; \bmu_{0|t,k}(\bz_t), \bSigma_{0|t}) $ is the Gaussian reverse transition kernel with $\bmu_{0|t,k}(\bz_t)$ given by Eq.~\eqref{eq.gmm.reverse.mean} and $\bSigma_{0|t}$ given by Eq.~\eqref{eq.gmm.reverse.cov}. 
Given $p_{\bZ_0 | \bZ_t, \xi}(\bz_0 | \bz_t, k)$ is now a tractable Gaussian, the integral in Eq.~\eqref{eq.w.obs.integral} still remains intractable, due to the nonlinear $\mM(\cdot)$ in $p_{\bY | \bZ_0}(\by | \bz_0)$. Therefore, we consider linearize $\mM(\cdot)$ around a chosen expansion point $\bmu_k^*$:
\begin{equation}\label{eq.m.linearization}
    \mM(\bx) \approx \mM(\bmu_k^*) + \bJ_{\mM}(\bmu_k^*)(\bx - \bmu_k^*) = \ba_k + \bH_k^* \bx,
\end{equation}
where $\bH_k^* = \bJ_{\mM}(\bmu_k^*)$ is the Jacobian and $\ba_k = \mM(\bmu_k^*) - \bH_k^* \bmu_k^*$.

\vspace{0.1cm}
\begin{remark}[Comparison with EKF]\label{remark.m.linearization}
Although the linearization in Eq.~\eqref{eq.m.linearization} resembles that in the Extended Kalman Filter (EKF), the purposes are fundamentally different. 
EKF applies linearization globally to propagate Gaussian statistics, thereby constraining the posterior to remain Gaussian, which can introduce significant errors under highly nonlinear $\mM(\cdot)$. 
In contrast, our linearization is applied only locally within the Gaussian integral of $w^{obs}(k,\by,\bz_t)$ to make it tractable and to introduce observation-dependence. The posterior samples are still generated by solving the reverse SDE, thereby preserving the flexibility to capture non-Gaussian features that EKF cannot.  
\end{remark}
\vspace{0.1cm}

Under this linearization, the integral in Eq.~\eqref{eq.w.obs.integral} reduces to a tractable  Gaussian form 
and can be evaluated explicitly, yielding
\begin{equation}\label{eq.w.obs.expression}
    w^{obs}(k,\by,\bz_t) 
    = (\by_k^* - \bH_k^* \bmu_{0|t,k}(\bz_t))^\top \big(\bH_k^* \bSigma_{0|t} \bH_k^{*\top} + \bSigma_{obs}\big)^{-1} 
    (\by_k^* - \bH_k^* \bmu_{0|t,k}(\bz_t)),
\end{equation}
where $\by_k^* = \by - \ba_k$ and $\bmu_{0|t,k}(\bz_t)$ is the reverse conditional mean from Eq.~\eqref{eq.gmm.reverse.mean}.
A natural choice for the linearization point of $\mM(\cdot)$ is $\bmu_k^* = \bmu_{0|t,k}(\bz_t)$, i.e., the mean of each Gaussian reverse transitional kernel $p_{\bZ_0 | \bZ_t, \xi}(\bz_0 | \bz_t, k)$. Substituting this choice into Eq.~\eqref{eq.w.obs.expression} simplifies the observation factor to
\begin{equation}\label{eq.w.obs.expression.concrete}
    w^{obs}(k,\by,\bz_t) 
    = \big(\by - \mM(\bmu_{0|t,k}(\bz_t))\big)^\top 
    \big(\bH_k \bSigma_{0|t} \bH_k^\top + \bSigma_{obs}\big)^{-1} 
    \big(\by - \mM(\bmu_{0|t,k}(\bz_t))\big),
\end{equation}
with $\bH_k = \bJ_{\mM}(\bmu_{0|t,k}(\bz_t))$.

In summary, the posterior weight $w^{post}(k,\bz_t,\by)$ is obtained as the product of the prior weight $w^{prior}(k,\bz_t)$ and an observation-dependent correction factor $w^{obs}(k,\by,\bz_t)$. The correction factor becomes analytically tractable by linearizing the observation operator $\mM(\cdot)$, with the linearization point chosen based on the location of the reverse Gaussian kernel $p_{\bZ_0 | \bZ_t, \xi}(\bz_0 | \bz_t, k)$. Alternatively, simpler choices of linearization of $\mM(\cdot)$ are discussed in \ref{sec.w.obs.extra}.

\subsubsection{Approximation of the conditional expectation $\bmu^{\bS_{\bY | \bX}} (\bz_t,\by)$}\label{sec.approx.cond.exp}
We next approximate the conditional expectation $\bmu^{\bS_{\bY | \bX}} (\bz_t,\by)$ defined in Eq.~\eqref{eq.s.mus.def}. 
By definition, this requires integrating likelihood score function $\bS_{\bY|\bX}(\by | \cdot)$ against the conditional density $p_{\bZ_0 | \bZ_t,\bY}(\bz_0 | \bz_t,\by)$, which is generally intractable due to the nonlinear observation operator $\mM(\cdot)$. 
A direct Monte Carlo approach would also be infeasible, since it would require first sampling from $p_{\bZ_0 | \bZ_t,\bY}$ and then averaging $\bS_{\bY|\bX}(\by|\cdot)$ on those samples at every step on each path of the reverse SDE, which lead to prohibitive computational cost.

To make the approximation computationally feasible, we replace the conditional expectation with a single evaluation of the likelihood score function  $\bS_{\bY|\bX}(\by | \cdot)$ at an appropriately chosen point. Specifically, we first move the expectation inside the likelihood score and approximate it as 
\begin{equation}\label{eq.cond.exp.approx.1}
    \bmu^{\bS_{\bY | \bX}} (\bz_t, \by) 
    \approx \bS_{\bY|\bX}\!\left(\by \,\big|\, \mathbb{E}\!\left[\bZ_0 \,\big|\, \bZ_t=\bz_t,\;\bY=\by\right]\right).
\end{equation}
The approximation error arises from the nonlinearity of $\bS_{\bY|\bX}(\by|\cdot)$ as a result of the nonlinear observation model $\mM(\cdot)$. This error is often referred to as Jensen’s gap. If $\mM(\cdot)$ is linear, then $\bS_{\bY|\bX}(\by|\cdot)$ is also linear, and the approximation becomes exact.

The next task is to determine the evaluation point in approximation from Eq.~\eqref{eq.cond.exp.approx.1}, namely 
\begin{equation}\label{eq.mu.bar.def}
    \Bar{\bmu}_0(\bz_t,\by) := \bbE[\bZ_0 | \bZ_t=\bz_t,\, \bY=\by].
\end{equation}
which is the conditional mean of the initial variable $\bZ_0$ given a observation $\bY$ and the noisy state $\bZ_t$. 
Using the conditional independence of $\bY$ and $\bZ_t$ given $\bZ_0$, we can formulate the joint Bayesian model for $\bbE[\bZ_0 | \bZ_t,\, \bY]$ as 
\begin{equation}\label{eq.bayes.joint}
\begin{aligned}
    \text{Prior:}& \;\; \bZ_0 \deq \bX,\\
    \text{Observation:}& \;\; \bY = \mM(\bZ_0) + \bveps_1, \quad \bveps_1 \sim \mN(0, \bSigma_{\text{obs}}),\\
    \text{Diffusion:}& \;\; \bZ_t = \alpha_t \bZ_0 + \beta_t \bveps_2, \quad \bveps_2 \sim \mN(0, \bI_d).
\end{aligned}
\end{equation}
where $\bveps_1 \perp \bveps_2$. 
This formulation can be interpreted as assimilating two observations into the prior $\bZ_0$: the real-world observation $\bY$ and the diffusion observation $\bZ_t$.
Assimilating $\bY$ first yields the posterior $\Bar{\bZ}_0 := \bZ_0 | \bY$, which then serves as the prior for assimilating $\bZ_t$:
\begin{equation}\label{eq.bayes.diffusion}
\begin{aligned}
    \text{Prior:}& \;\; \Bar{\bZ}_0 \deq \bZ_0 | \bY,\\
    \text{Observation:}& \;\; \bZ_t = \alpha_t \Bar{\bZ}_0 + \beta_t \bveps_2.
\end{aligned}
\end{equation}
Here, the prior $\Bar{\bZ}_0$ is exactly the unknown target posterior $p_{\bZ_0|\bY} = p_{\bX|\bY}$, while the diffusion observation follows a linear Gaussian model.

In light of the linear observation model in Eq.~\eqref{eq.bayes.diffusion}, we approximate the unknown prior $\Bar{\bZ}_0 \deq \bX | \bY$ with a Gaussian distribution $\Bar{\bZ}_0^* \sim \mN(\bmu^*, \bSigma^*)$, which we refer to as the \emph{reference posterior}. 
With this approximation, the posterior mean in Eq.~\eqref{eq.bayes.diffusion} can be computed analytically via the Kalman update formula:
\begin{equation}\label{eq.mubar.expression}
    \Bar{\bmu}_0(\bz_t, \by) \approx \Bar{\bmu}_0^*(\bz_t) = \bmu^* + \alpha_t \bSigma^*(\alpha_t^2 \bSigma^* + \beta_t^2 \bI_d)^{-1}(\bz_t - \alpha_t \bmu^*).
\end{equation}
Substituting this expression into Eq.~\eqref{eq.cond.exp.approx.1} yields the final approximation
\begin{equation}\label{eq.cond.exp.approx.final}
    \bmu^{\bS_{\bY | \bX}} (\bz_t,\by) 
    \approx \bS_{\bY | \bX}(\by | \Bar{\bmu}_0^*(\bz_t)),
\end{equation}
where the evaluation point $\Bar{\bmu}_0^*(\bz_t)$ depends on the choice of the reference posterior $\mN(\bmu^*, \bSigma^*)$.

From the expression of our approximated evaluation point $\Bar{\bmu}_0^*(\bz_t)$ in Eq.~\eqref{eq.mubar.expression}, we see that it interpolates between $\bz_t$ at $t=0$ and the reference mean $\bmu^*$ at $t=T$. The chosen reference posterior $\mN(\bmu^*, \bSigma^*)$ determines the endpoint at $t=T$ with $\bmu^*$, while the interpolation path is controlled by $\bSigma^*$ together with the noise schedule $(\alpha_t, \beta_t)$ (see Remark~\ref{remark.mubar} for a more detailed discussion of this interpolation behavior). In the next section, we introduce an iterative procedure to progressively improve the choice of the reference posterior $\mN(\bmu^*, \bSigma^*)$.

\vspace{0.1cm}
\begin{remark}[Comparison with Laplace approximation]\label{remark.gaussian.aprox}
Approximating the true posterior $\Bar{\bZ}_0 \deq \bX | \bY$ by a Gaussian  $\mN(\bmu^*, \bSigma^*)$ is reminiscent of the classical Laplace approximation, where a non-Gaussian posterior is approximated directly as a Gaussian via a second-order Taylor expansion of the log-posterior. 
The crucial difference is that here the Gaussian assumption is used only as an auxiliary tool to compute the evaluation point of the likelihood score in Eq.~\eqref{eq.cond.exp.approx.1}.
And the posterior samples themselves are still generated by the reverse SDE, so the final distribution remains non-Gaussian.
\end{remark}
\vspace{0.1cm}
\begin{remark}[Interpolation behavior of $\Bar{\bmu}_0^*(\bz_t)$]\label{remark.mubar}
From Eq.~\eqref{eq.mubar.expression}, the interpolating behavior of $\Bar{\bmu}_0^*(\bz_t)$ can be summarized as follows:
\begin{itemize}[leftmargin=15pt]
    \item At $t=0$: We have $\Bar{\bmu}_0^*(\bz_t) = \bz_t$. From the Bayesian formulation in Eq.~\eqref{eq.bayes.diffusion}, it also follows that $\Bar{\bmu}_0(\bz_t) = \bz_t$, which is consistent with $\Bar{\bmu}_0^*(\bz_t)$. The original EnSF, which always evaluates at $\bz_t$, is also consistent in this regime. 
    
    \item At $t=T$: We obtain $\Bar{\bmu}_0^*(\bz_t) = \bmu^*$, while the true value is $\Bar{\bmu}_0 (\bz_t,\by) = \mathbb{E}[\bX | \bY]$. In both cases, the evaluation point becomes constant and independent of $\bz_t$, though the constants differ. Consequently, both the true $\Bar{\bmu}_0 (\bz_t,\by)$ and approximate $\Bar{\bmu}_0^*(\bz_t)$ yield an almost constant posterior score when $t$ is close to $T$, which does not distort the intermediate distribution from Gaussian during the reverse SDE. In contrast, the original EnSF continues to evaluate $\bS_{\bY|\bX}(\by | \cdot)$ at $\bz_t$, which can introduce spurious distortions at the beginning of the reverse SDE, precisely when the distribution should remain close to Gaussian. 
    
    \item For $0 < t < T$: At intermediate times, $\Bar{\bmu}_0^*(\bz_t)$ interpolates between $\bz_t$ and $\bmu^*$ in an \emph{anisotropic} manner determined by $\bSigma^*$. In the special case of an isotropic covariance, $\bSigma^* = \sigma^2 \bI$, the interpolation simplifies to a scalar-weighted linear combination:
    \begin{equation}\label{eq.mubar.iso}
        \Bar{\bmu}_0^*(\bz_t) = (1-\alpha_t \lambda_t)\,\bmu^* + \lambda_t\,\bz_t, \quad \text{with} \quad \lambda_t = \frac{\alpha_t \sigma^2}{\alpha_t^2 \sigma^2 + \beta_t^2}. 
    \end{equation}
\end{itemize}
\end{remark}

\subsubsection{The iterative refinement of the likelihood score evaluation point $\Bar{\bmu}_0^*(\bz_t)$ }\label{sec.iterative.refinement}

After the approximations in Sections~\ref{sec.approx.post.weight} and \ref{sec.approx.cond.exp}, the posterior score takes the computable form
\begin{equation}\label{eq.posterior.score.approx.final}
    \Hat{\bS}_{\bZ_t|\bY}(\bz_t|\by) 
    = \sum_k w^{prior}(k,\bz_t)\, w^{obs}(k,\bz_t,\by)\, \bS_{\bZ_t|k}(\bz_t) 
    + \bJ(t)\, \bS_{\bY|\bX}\!\big(\by \,\big|\, \Bar{\bmu}_0^*(\bz_t)\big), 
\end{equation}
where $w^{prior}(k,\bz_t)$ is given in Eq.~\eqref{eq.gmm.reverse.weight}, $w^{obs}(k,\bz_t,\by)$ in Eq.~\eqref{eq.w.obs.expression}, $\bS_{\bZ_t|k}(\bz_t)$ in Eq.~\eqref{eq.gmm.prior.score.component}, $\bJ(t)$ in Eq.~\eqref{eq.jt.expression}, and $\Bar{\bmu}_0^*(\bz_t)$ in Eq.~\eqref{eq.mubar.expression}. 
Thus, posterior sampling can already be carried out given a choice of reference posterior $\mN(\bmu^*, \bSigma^*)$. 
The only missing piece is how to determine this reference posterior $\mN(\bmu^*, \bSigma^*)$ in practice.

By construction, $\mN(\bmu^*, \bSigma^*)$ serves as a Gaussian approximation to the unknown posterior $\Bar{\bZ}_0 \deq \bX | \bY$. 
Since this target distribution is unavailable, we introduce an \emph{iterative refinement procedure} to estimate it. 
We begin with an initial guess based on the prior samples $\{\bx_k\}_{k=1}^K \sim p_{\bX}$, fitting a Gaussian $\mN(\bmu^*, \bSigma^*)$ that initially contains no observational information. 
Using this reference, we run the reverse SDE to generate an ensemble of approximate posterior samples $\{\bx_k^*\}_{k=1}^K$. 
From these samples, we update the Gaussian approximation $\mN(\bmu^*, \bSigma^*)$, which now incorporates the influence of $\bY$ through the likelihood score $\bS_{\bY | \bX}(\by | \cdot)$, thereby improving its alignment with the true posterior $\bX | \bY$. 
This process is then repeated, iteratively refining both the Gaussian reference and the ensemble until convergence, i.e., when successive Gaussian fits no longer change significantly. 
For numerical stability, two smoothing parameters $\eta_1$ and $\eta_2$ are introduced when updating $\bmu^*$ and $\bSigma^*$ in Alg.~\ref{alg:iensf}.

\subsection{Summary of the full IEnSF Algorithm}\label{sec.iensf.sum}
We now combine the approximations from the previous sections into a complete algorithm for the data assimilation (DA) update step. The pseudo-code for the Iterative Ensemble Score Filter (IEnSF) is provided in Alg.~\ref{alg:iensf}.  
To highlight the differences, let us compare our new posterior score approximation in Eq.~\eqref{eq.posterior.score.approx.final} with the original EnSF approximation:
\begin{equation}\label{eq.ensf.post.score}
    \text{EnSF:}\quad 
    \Hat{\bS}_{\bZ_t|\bY}(\bz_t | \by) 
    = \sum_k w^{prior}(k,\bz_t)\, \bS_{\bZ_t | k}(\bz_t) 
    + h(t)\, \bS_{\bY|\bX}(\by | \bz_t),
\end{equation}
where $h(t) = 1-t$ is the chosen damping function.  
Our method introduces three main improvements over the original EnSF:  
\begin{itemize}[leftmargin=15pt]
    \item {\em Time-scaling of the likelihood score.} 
    EnSF uses a heuristic damping factor $h(t)=1-t$, chosen only to satisfy $h(0)=1$ and $h(1)=0$.  
    In contrast, IEnSF employs the time-scaling $\bJ(t)$, analytically derived from Theorem~\ref{thm.post.score.expression}.  
    This formulation not only guarantees the correct boundary behavior but also adapts dynamically to the estimated prior covariance $\bSigma$, which can vary across DA cycles.  

    \item {\em Observation-informed prior score weighting.}  
    EnSF approximates mixture weights using only $w^{prior}(k,\bz_t) = p_{\xi|\bZ_t}(k|\bz_t)$, thereby ignoring the observation $\by$.  
    IEnSF introduces an additional factor $w^{obs}(k,\bz_t,\by)$, approximating $p_{\bY|\bZ_t,\xi}(\by|\bz_t,k)$ from the Bayesian decomposition in Eq.~\eqref{eq.w.post.bayes.decom.p}.  
    This reweighting directs the posterior approximation toward ensemble members consistent with the observation, yielding a closer match to the true posterior distribution.  

    \item {\em Refined likelihood score evaluation.}  
    EnSF evaluates the likelihood score $\bS_{\bY|\bX}(\by|\cdot)$ at $\bz_t$.  
    IEnSF instead evaluates at $\Bar{\bmu}_0^*(\bz_t)$, obtained from the Gaussian update in Eq.~\eqref{eq.mubar.expression} and refined iteratively with observational information.  
    This refinement reduces distortions in the early stages of the reverse SDE, when the distribution should remain close to Gaussian, and incorporates explicit observation dependence for better theoretical consistency.  
\end{itemize}

Together, these three improvements yield a posterior score approximation that is theoretically grounded and better aligned with the true Bayesian update.

\begin{algorithm}[t]
\caption{IEnSF for a single DA update step}\label{alg:iensf}
\setstretch{1.2}
\begin{algorithmic}[1]
\Require Prior ensemble $\{ \bx_k \}_{k=1}^K$, observation $\by$, max refinement iteration $M$, posterior reference smoothing $(\eta_1,\eta_2)\in[0,1]^2$
\Ensure Posterior ensemble $\{ \bx_k^{*} \}_{k=1}^K$
\vspace{0.25em}
\State \textbf{Construct GMM prior:} Construct the GM means $\{ \bmu_k \}_{k=1}^K$ and shared covariance $\bSigma$ from $\{ \bx_k \}_{k=1}^K$ by Eq.~\eqref{eq.prior.gm.construct}.
\State \textbf{Initialize reference posterior:} $\bmu^* \gets \text{mean}(\{ \bx_k \})$, \; $\bSigma^* \gets \text{cov}(\{ \bx_k \})$.
\For{$m = 1,\ldots,M$}
    \State \textbf{Posterior sampling:} Using the estimated posterior score in Eq.~\eqref{eq.posterior.score.approx.final} with reference $\mN(\bmu^*,\bSigma^*)$, simulate the reverse SDE/ODE to obtain the posterior samples $\{ \bx_k^{*} \}_{k=1}^K$.
    \State \textbf{Fit a posterior Gaussian from samples:} $\;\Bar{\bmu} \gets \text{mean}(\{ \bx_k^{*} \})$, \; $\Bar{\bSigma} \gets \text{cov}(\{ \bx_k^{*} \})$.
    \State \textbf{Convergence check:} If $d(\mN(\Bar{\bmu}, \Bar{\bSigma}), \mN(\bmu^*, \bSigma^*)) \le \text{tol}$ , \textbf{break}.
    \State \textbf{Reference update (smoothed):} 
    \vspace{-0.3cm}
    \begin{equation*}
        \bmu^* \gets (1-\eta_1)\,\bmu^* + \eta_1\,\Bar{\bmu}, 
        \qquad
        \bSigma^* \gets (1-\eta_2)\,\bSigma^* + \eta_2\,\Bar{\bSigma}. \vspace{-0.3cm}
    \end{equation*}
\EndFor
\State \textbf{Return} $\{ \bx_k^{*} \}_{k=1}^K$.
\end{algorithmic}
\end{algorithm}
 {The tunable parameters of IEnSF are the variance-splitting parameter $\gamma$, and the
localization and inflation applied to $\bar{\bSigma}$; these are selected by grid search, as
for the ensemble Kalman baselines. The reference-update smoothing parameters $\eta_1,\eta_2$
and the maximum iteration count $M$ are fixed in advance rather than tuned per case:
$\eta_1=\eta_2=0.5$ by default, reduced only if instability is observed, and $M$ is set based on
the convergence behavior in Figure~\ref{fig.linear.da.iter}, where the reference stabilizes within a few iterations.}
\section{Numerical Examples}\label{sec.numerical}
In this section, we present numerical experiments to demonstrate the performance of the proposed IEnSF method.

\subsection{Bias correction and non-Gaussianity for nonlinear Bayesian inference}\label{sec.bias_correction}
In this example, we evaluate the accuracy of the posterior distribution in terms of both bias correction and the ability of the posterior score to capture non-Gaussian behavior. To eliminate errors associated with constructing the GMM prior from finite ensembles, we assume a known Gaussian prior for all scenarios, with mean $[0,0]$ and covariance matrix 
$\begin{bmatrix}
0.5 & -0.4 \\
-0.4 & 0.5
\end{bmatrix}.$

\paragraph{Scenario 1: Linear observation model}
We first consider a linear observation operator $\mM(x_1,x_2) = x_1$ with observation noise standard deviation $\sigma_{\text{obs}} = 0.1$ and observed value $y=3$. In this case, the true posterior can be obtained analytically using the Kalman update formula, and this Gaussian posterior serves as the ground truth for comparison.

Figure~\ref{fig:linear-gaussian-iter} shows posterior samples obtained during the iterative refinement procedure of our method. The results demonstrate that the iterative scheme progressively improves the posterior approximation by updating the Gaussian posterior guess $\mN(\bmu^*, \bSigma^*)$, ultimately producing samples that closely match the true posterior. To quantify convergence, we compute the KL divergence between the posterior samples and the analytic Gaussian posterior. The evolution of the KL divergence, shown in Figure~\ref{fig:linear-gaussian-iter-KL}, confirms that the approximation improves with each iteration. 
 {
We emphasize that the improved score approximation and the iterative refinement are two separate mechanisms: the score expression already reduces the bias at the first pass, while the iteration corrects the reference posterior, which is initialized from the prior samples alone and therefore differs from the true posterior. The number of iterations is governed by the distance between the posterior and this prior-based reference, so when the two are close the refinement converges in very few iterations. Accordingly, the linear cases here serve as a consistency check that IEnSF recovers the correct Gaussian posterior, rather than a setting where IEnSF is expected to outperform the Kalman filter.
}

We also compare posterior samples from IEnSF and EnSF. The EnSF consistently exhibits bias with respect to the true posterior, visible both in the sample distribution (Figure~\ref{fig:linear-gaussian-iter-KL}, middle) and in the KL divergence (Figure~\ref{fig:linear-gaussian-iter-KL}, left). In contrast, our iterative method produces posterior samples that align much more closely with the analytic posterior, as reflected in the lower KL values and improved sample distribution (Figure~\ref{fig:linear-gaussian-iter-KL}, right). These results demonstrate that the proposed iterative posterior update, together with the refined posterior score approximation, substantially reduces bias in posterior sampling.

\begin{figure}[h]
    \centering
    \includegraphics[width=0.9\linewidth]{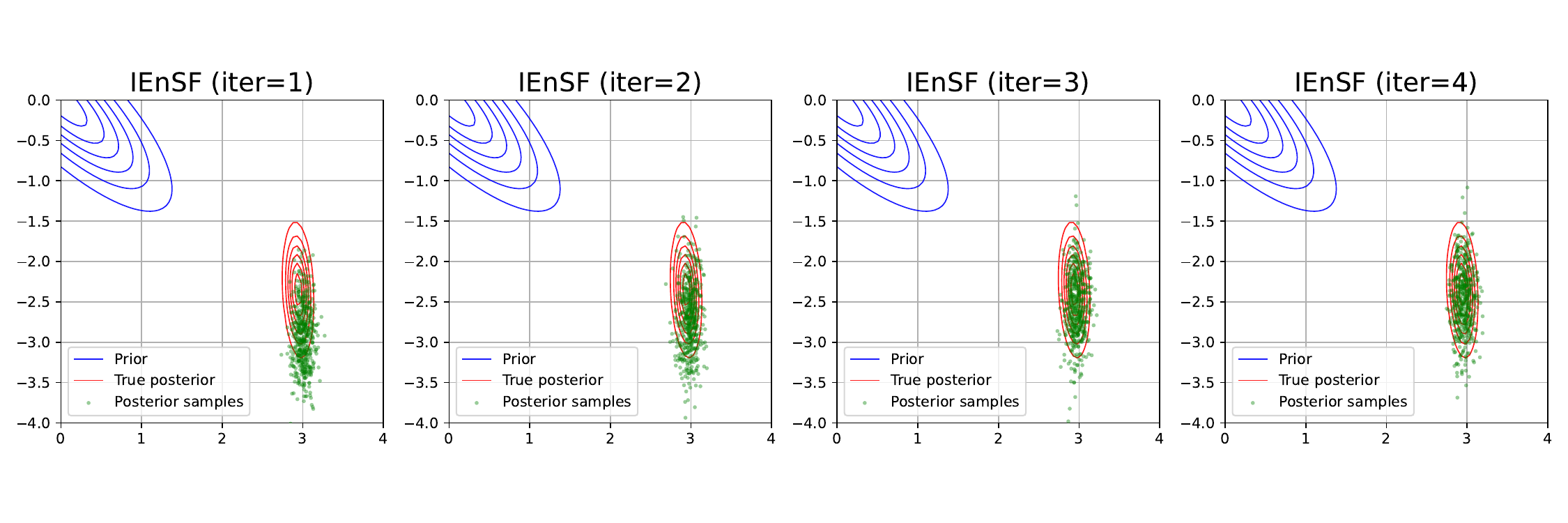}
    \caption{Posterior samples generated during the iterative refinement procedure for the linear observation model. The true posterior is Gaussian and obtained analytically from the Kalman update. Iterative updates progressively reduce the discrepancy between the samples and the true posterior.}
    \label{fig:linear-gaussian-iter}
\end{figure}

\begin{figure}[h]
    \centering
    \includegraphics[width=0.99\linewidth]{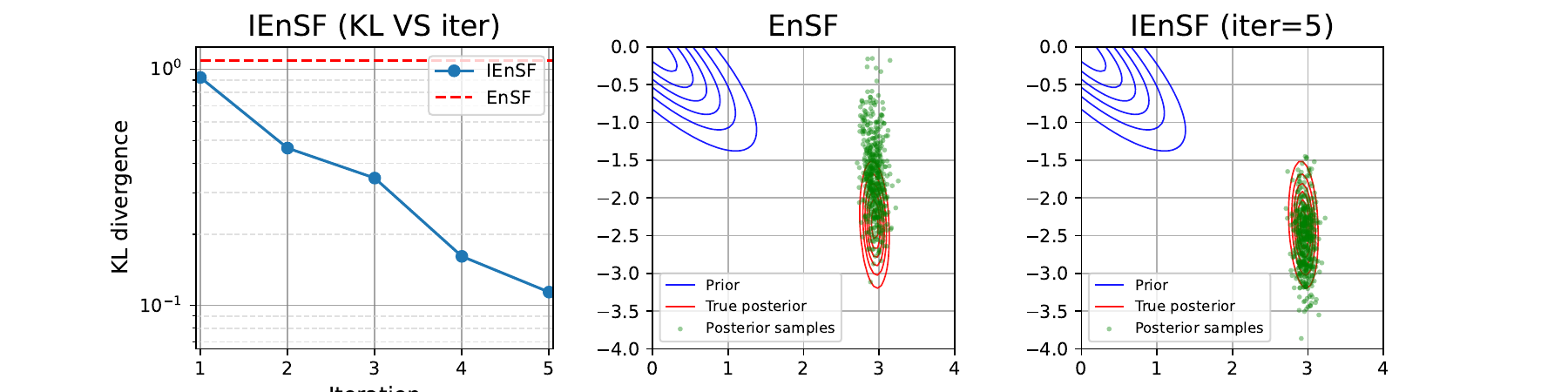}
    \caption{KL divergence and posterior samples for the linear observation model. 
    \textbf{Left:} KL divergence between posterior samples and the true Gaussian posterior across iterations. 
    \textbf{Middle:} Posterior samples from EnSF. 
    \textbf{Right:} Posterior samples from IEnSF at iteration 5. 
    The proposed IEnSF converges rapidly toward the true posterior, while EnSF retains a persistent bias.}
    \label{fig:linear-gaussian-iter-KL}
\end{figure}

\paragraph{Scenario 2: Non-Gaussian posterior}
We next examine the ability of our method to handle nonlinear observation operators, which can lead to highly non-Gaussian posteriors. Specifically, we consider the nonlinear operator 
$\mM(x_1, x_2) = \sqrt{(x_1 - 1)^2 + (x_2 - 1)^2}$
with observed value $y = 1.5$ and observation noise standard deviation $\sigma_{\text{obs}} = 0.1$.

Figure~\ref{fig:non-Gaussian-dist} shows the posterior ensembles obtained from IEnSF across the first four iterations, along with results from EnKF, the original EnSF, importance sampling with resampling (particle filter), and a computationally expensive long-run MCMC sampler, which we treat as the ground truth. The results demonstrate that IEnSF captures the non-Gaussian posterior effectively, even at the first iteration. This is because we initialize the posterior guess from the prior distribution, and in this example, the prior and posterior are already close. As a result, the initial posterior guess introduces only a small error, which the reverse SDE can effectively correct, yielding accurate posterior samples even without the iterative refinement. This highlights the robustness of the EnSF framework. The original EnSF also performs well in this setting, while the PF succeeds due to the absence of weight degeneracy. In contrast, EnKF fails to capture the posterior because of its restrictive Gaussian assumption.

\begin{figure}
    \centering
    \includegraphics[width=0.9\linewidth]{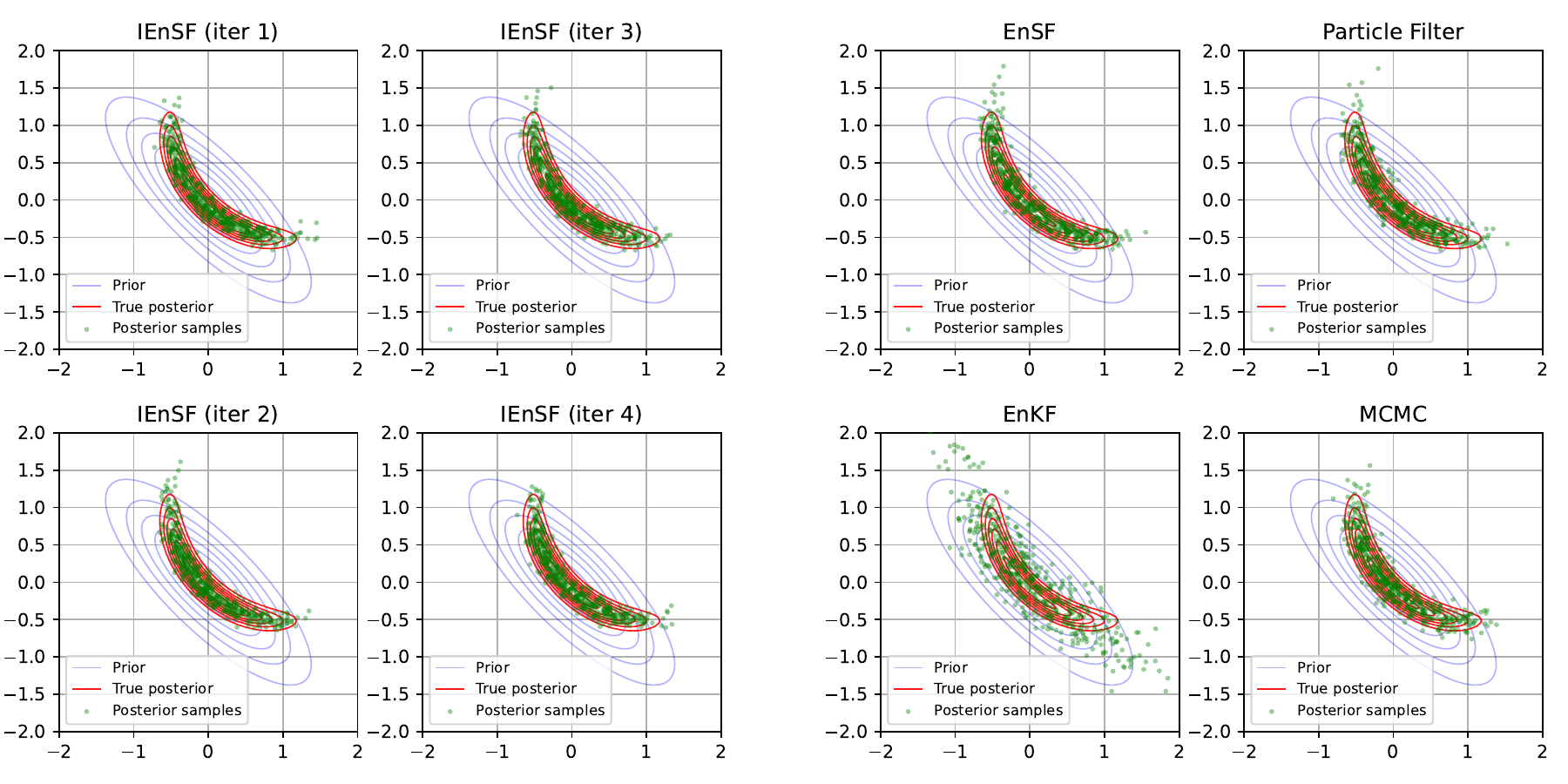}
    \caption{Non-Gaussian posterior sampling with a nonlinear observation operator. Results are shown for IEnSF (first four iterations), EnKF, particle filter, the original EnSF, and MCMC (ground truth). IEnSF captures the non-Gaussian posterior effectively even at the first iteration, since the initial reference posterior (based on the prior distribution) is already close to the true posterior.}
    \label{fig:non-Gaussian-dist}
\end{figure}

\paragraph{Scenario 3: Posterior bias correction}
Finally, we investigate posterior bias correction in settings where the posterior is far from the prior due to nonlinear observation operators. Such cases arise in practice under sudden exogenous shocks or in high-dimensional systems. Here we use
$\mM(x_1, x_2) = \sqrt{(x_1 -6)^2 + (x_2 -6)^2}$ 
with observed value $y = 2$ and $\sigma_{\text{obs}} = 0.3$.

Figure~\ref{fig:bias-correction-dist} illustrates that the particle filter suffers from severe weight degeneracy, with only a single particle surviving after resampling. EnKF broadly covers the posterior support but introduces substantial errors due to its Gaussian assumption. The original EnSF exhibits a systematic bias because its heuristic posterior score construction overemphasizes the likelihood term, pulling the posterior samples too close to the observation. Our method initially displays a similar bias, since the first posterior guess, which is based on the prior, is far from the true posterior. However, the iterative refinement progressively corrects this discrepancy, and the posterior samples converge toward the true distribution.

\begin{figure}[h]
    \centering
    \includegraphics[width=0.9\linewidth]{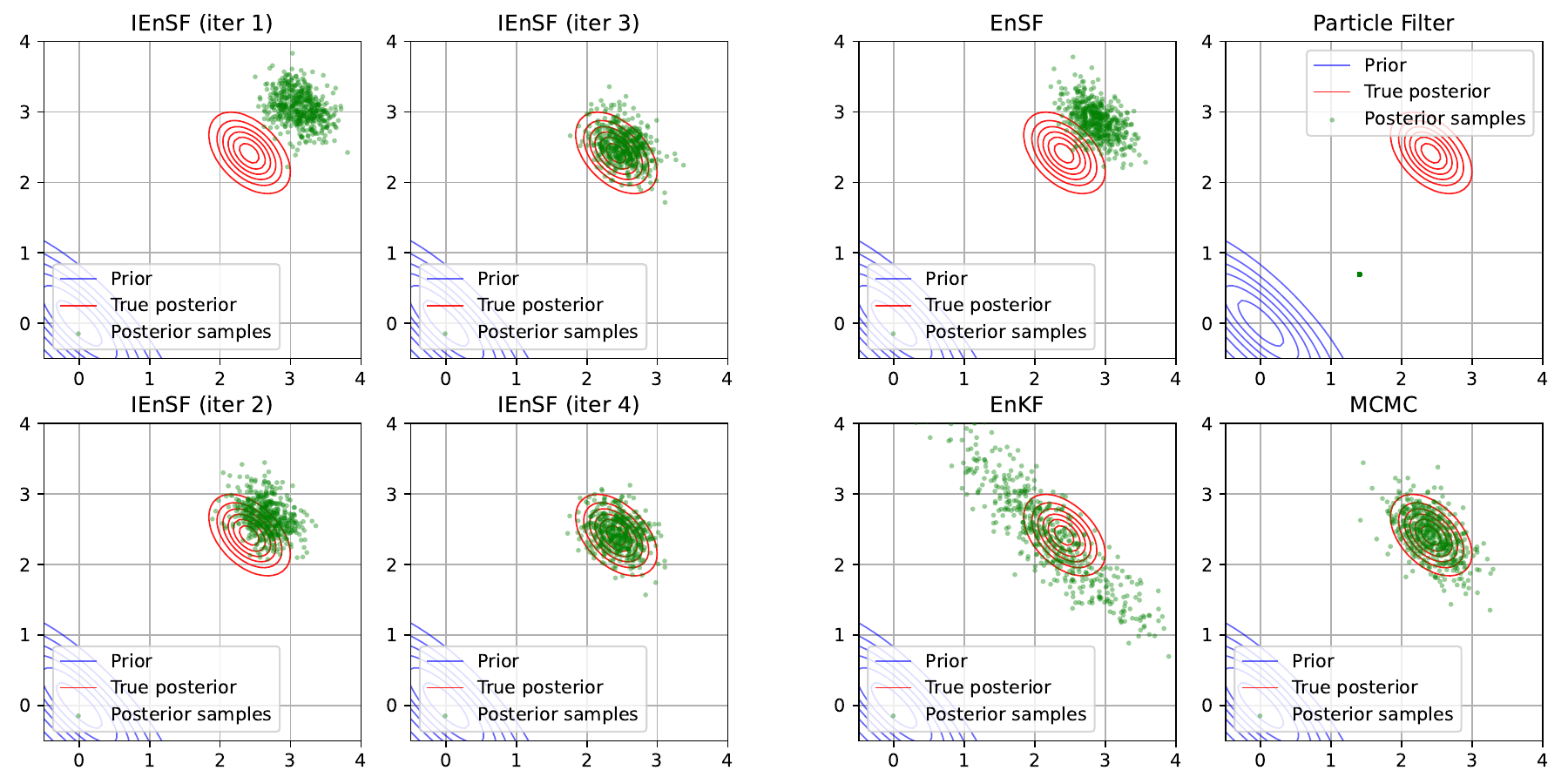}
    \caption{Bias correction in posterior sampling under a nonlinear observation operator. Compared methods include PF, EnKF, EnSF, and IEnSF. IEnSF reduces bias over iterations and converges toward the true posterior.}
    \label{fig:bias-correction-dist}
\end{figure}

\subsection{Harmonic oscillator}
We next evaluate the performance of IEnSF on a low-dimensional DA problem using a harmonic oscillator model.  
We consider a two-dimensional linear Gaussian state-space system corresponding to a discrete-time harmonic oscillator.  
Let $\bX_n \in \bbR^2$ denote the state at time step $n$, evolving according to  
\begin{equation}\label{eq.ho.state}
    \bX_{n+1} \;=\; 
    \begin{bmatrix}
        \cos(\omega \Delta t) & \tfrac{\sin(\omega \Delta t)}{\omega} \\[6pt]
        -\omega \sin(\omega \Delta t) & \cos(\omega \Delta t)
    \end{bmatrix} \bX_n + \bW_n, 
    \qquad \bW_n \sim \mathcal{N}(\bzero, \bQ).
\end{equation}
We set $\omega = 2.0$, $\Delta t = 0.1$, process noise covariance $\bQ = (0.5)^2 \bI_2$, and initial state $\bX_0 = [\,3.0,\,-3.0\,]^T$.  
The observation $\bY_n \in \bbR$ is obtained by directly measuring the first component of $\bX_n$ with Gaussian noise of standard deviation $\sigma_{\text{obs}} = 0.5$.  
The system is simulated for $100$ steps to generate reference state and observation sequences.  
All DA methods (EnKF, PF, EnSF, IEnSF) are initialized from a standard Gaussian.  
This linear setting serves as a controlled test to verify whether IEnSF can recover the true Gaussian posterior.  
Nonlinear observation cases are deferred to the Lorenz-96 example in Section~\ref{sec.l96}.  
 {As discussed in Section~\ref{sec.bias_correction}, the outer iteration in IEnSF refines only the Gaussian reference used to place the likelihood-score evaluation point, a correction that is needed even in linear problems because the reference is initialized from the prior alone; consistent with that discussion, Figure~\ref{fig.linear.da.iter} below shows this refinement converging within a few iterations in the present linear setting.}

For the linear case, the DA problem admits an exact solution via the Kalman filter (KF).  
Figure~\ref{fig.linead.DA.KL} compares the KL divergence of posterior ensembles generated by EnKF, PF, EnSF, and IEnSF against the true KF posterior.  
Each method uses 200 ensemble members, and results are averaged over 10 repetitions.  
For IEnSF, we use 5 refinement iterations.  
The EnKF achieves the smallest KL divergence since it makes a direct Gaussian assumption about the posterior distribution.  
Other sampling-based methods (PF, EnSF, IEnSF) naturally have larger KL values than EnKF.  
Compared with the consistent PF, IEnSF yields a very similar KL, only slightly larger due to approximation errors in the posterior score and numerical errors from solving the reverse SDE.  
By contrast, the original EnSF performs poorly, with a large KL divergence, due to the structural limitations of its posterior score construction. 

\begin{figure}[h!]
    \centering
    \includegraphics[width=0.7\linewidth]{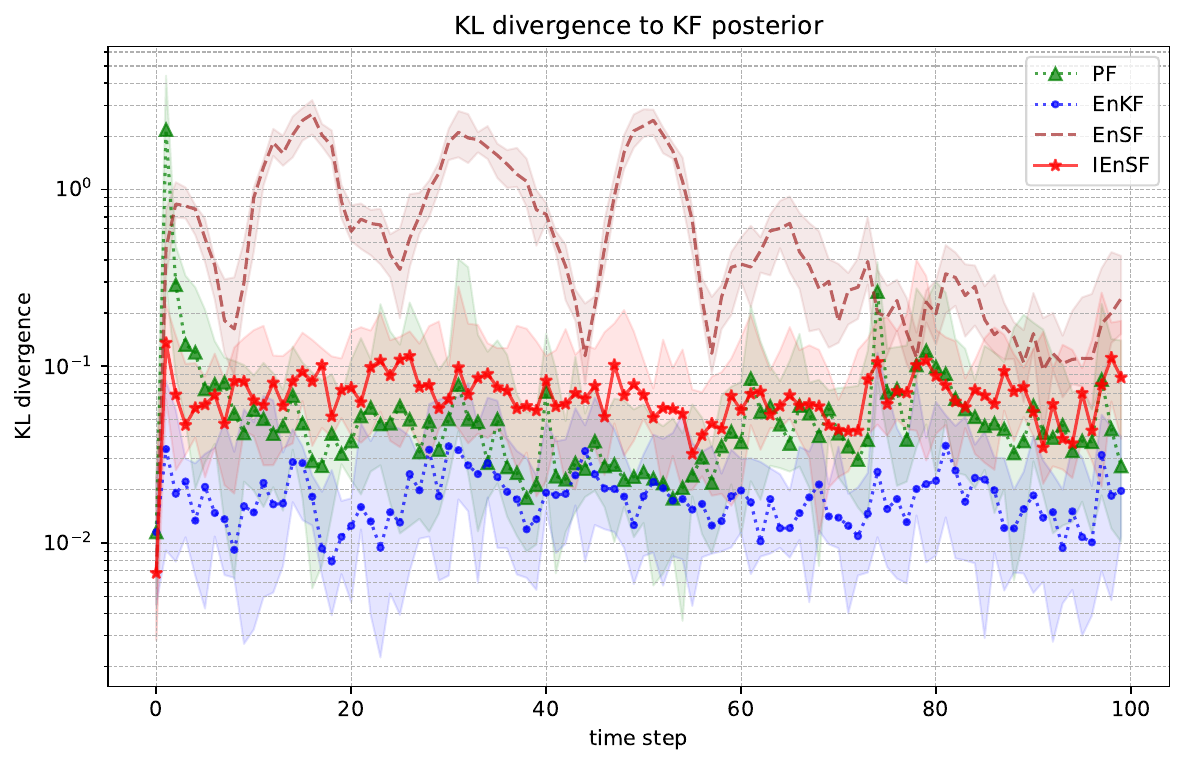}
    \caption{KL divergence of posterior ensembles against the KF posterior for the harmonic oscillator problem.  
    Each method uses 200 ensemble members with 10 repetitions. Error bars show the 10th and 90th percentiles. 
    EnKF performs best due to its Gaussian assumption, while IEnSF matches PF closely.  
    EnSF performs poorly due to limitations in its posterior score formulation.}
    \label{fig.linead.DA.KL}
\end{figure}

To further analyze the KL divergence results, Figure~\ref{fig.linear.DA.est} compares posterior means and spreads for each method along both state dimensions.  
The PF, EnKF, and IEnSF all produce posterior means consistent with the KF posterior, while EnSF exhibits noticeable bias in both dimensions, with especially large errors in the unobserved dimension.  
In terms of posterior spread, PF and EnKF agree with the true KF spread in the observed dimension, while IEnSF produces a slightly smaller spread, contributing to its marginally larger KL. 
In the unobserved dimension, all sampling-based methods (PF, EnSF, IEnSF) exhibit a similar spread, which initially increases and then stabilizes over time.

\begin{figure}[h!]
    \centering
    \includegraphics[width=0.99\linewidth]{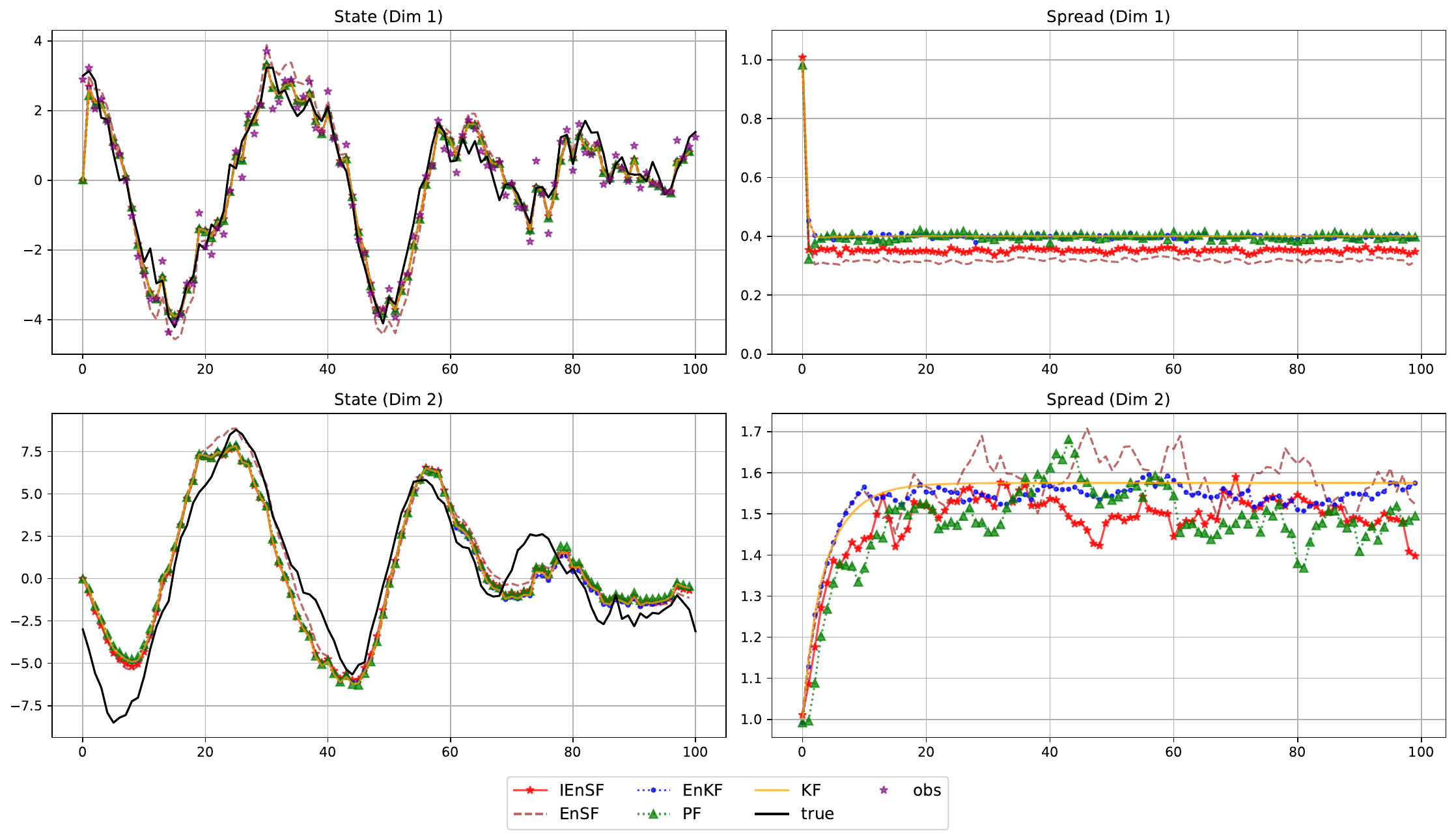}
    \caption{Posterior means and spreads for the two state dimensions of the harmonic oscillator.  
    PF, EnKF, and IEnSF agree closely with the KF posterior, while EnSF shows systematic bias and fails to track the unobserved dimension effectively.}
    \label{fig.linear.DA.est}
\end{figure}

Finally, to evaluate the effect of iterative refinement in IEnSF, Figure~\ref{fig.linear.da.iter} presents the average KL divergence between the IEnSF and KF posteriors as a function of iteration number.  
The KL divergence decreases rapidly during the first few iterations and stabilizes after about three, indicating convergence.  
Moreover, iterative refinement not only reduces the mean KL divergence but also narrows its spread, yielding more consistent and accurate approximations to the true posterior.  
These results highlight both the effectiveness and robustness of the proposed iterative refinement scheme.

\begin{figure}[h!]
    \centering
    \includegraphics[width=0.6\linewidth]{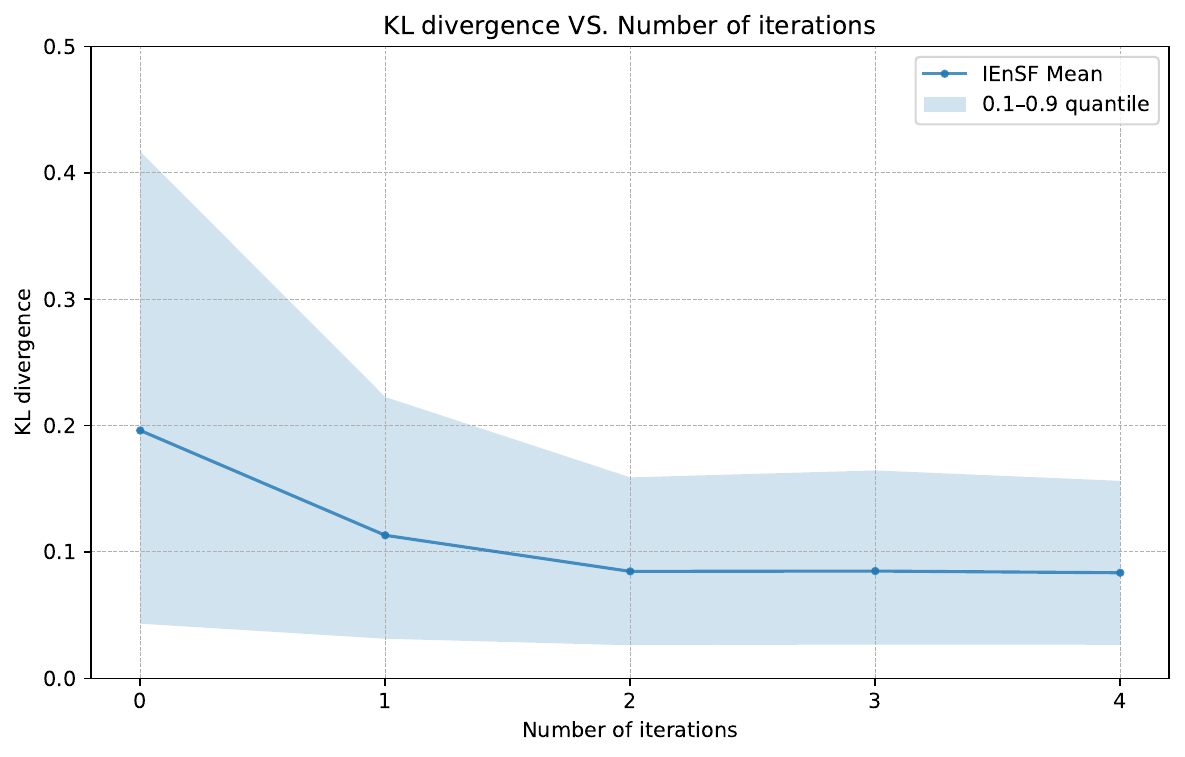}
    \caption{
    Average KL divergence between the IEnSF and KF posteriors as a function of iteration number. 
    The KL divergence is averaged over both time and repetitions, with shaded regions denoting the 0.1–0.9 quantiles. 
    Iterative refinement rapidly reduces both the mean and spread of the KL divergence, yielding more consistent approximations to the true posterior. 
    The KL divergence stabilizes after about three iterations, indicating convergence of the iterative refinement and demonstrating the robustness of the refinement.
    }
    \label{fig.linear.da.iter}
\end{figure}

\subsection{1000D Lorenz-96 (L96) model}\label{sec.l96}
We now evaluate the proposed method on a high-dimensional chaotic system using the 1000-dimensional Lorenz-96 (L96) model. Let $\bX(t) = [x_1(t), x_2(t), \cdots, x_d(t)]^{\top}$ with $d \geq 4$ denote the system state variable. The model dynamics are governed by the following ordinary differential equation:
\begin{equation}\label{eq:l96}
\frac{dx_i}{dt} = (x_{i+1} - x_{i-2}) x_{i-1} + F, 
\qquad i = 1, 2, \cdots, d, \quad d \geq 4,
\end{equation}
where periodic boundary conditions are applied, i.e., $x_{-1} \equiv x_{d-1}$, $x_{0} \equiv x_{d}$, and $x_{d+1} \equiv x_1$. We set $F = 8$ and $d = 1000$.

To generate the reference (ground-truth) trajectory, we integrate Eq.~\eqref{eq:l96} using a fourth-order Runge–Kutta (RK4) scheme with a time step of $\Delta t = 0.01$. The initial state $\bX(0)$ is drawn from a 1000-dimensional Gaussian distribution with mean 0 and standard deviation 3, then integrated for 1000 steps to produce a chaotic trajectory for the DA experiments.

We use 20 ensemble members for all DA methods (EnKF, LETKF, EnSF, and IEnSF).
 {The initial ensembles are randomly drawn (without replacement) from a precomputed pool of states on the L96 attractor, rather than perturbing the true state.}
For all DA methods, the forecast model is integrated with a coarser time step of $\Delta t = 0.02$, ensuring that none of the methods has direct access to the true dynamics. Figure~\ref{fig:l96-chaos} illustrates how the root mean square error (RMSE) of the L96 trajectory increases rapidly, even when initialized from the true state. This divergence results from the inherent chaotic nature of the system, where small numerical or initialization errors amplify exponentially and cause the forecast to deviate from the truth. This sensitivity to initial conditions highlights the importance of data assimilation for maintaining accurate and stable state estimates.

In addition, Figure~\ref{fig:l96-chaos} shows the reference RMSE of the forecast model without applying data assimilation, where the system is initialized with a perturbed true initial state and propagated forward using the coarse integration step. The forecast error grows much faster than in the case starting from the true initial state, until the simulated trajectory completely diverges from the truth, after which the RMSE stabilizes around 5. This result demonstrates how rapidly the chaotic dynamics of the L96 system can degrade predictive accuracy when data assimilation is not employed.

\begin{figure}[h!]
    \centering
    \includegraphics[width=0.6\linewidth]{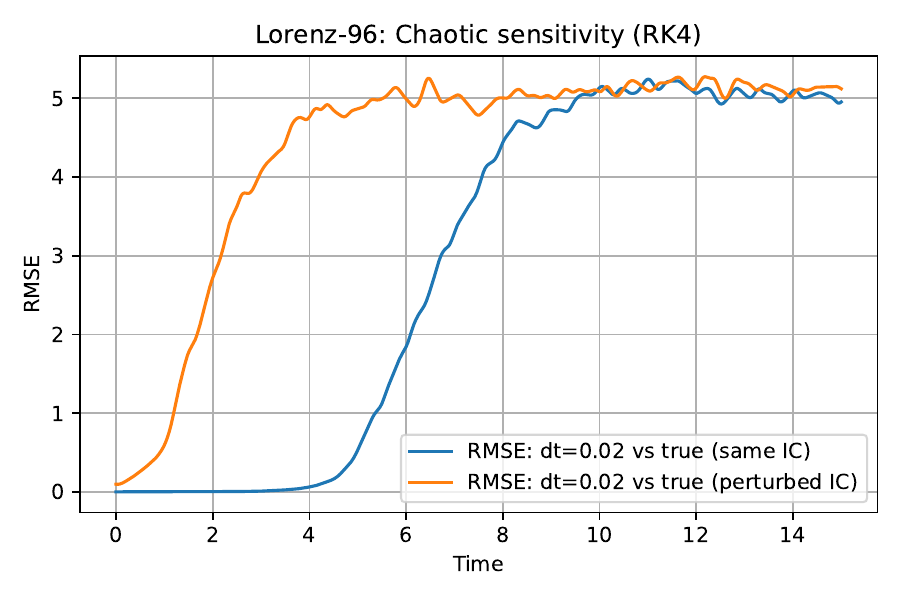}
    \caption{RMSE divergence of the Lorenz-96 trajectory over time. Even with the true initial condition, small numerical and initialization errors amplify due to chaotic dynamics, emphasizing the importance of data assimilation.}
    \label{fig:l96-chaos}
\end{figure}

We next consider two observation operators, direct and nonlinear, which differ in the fraction of the state observed.  {For the direct observation case, every fourth state variable is observed (25 percent of the system); for the $\arctan(\cdot)$ observation case, every second state variable is observed (50 percent of the system), since the denser network is needed to retain sufficient observational information once it is compressed by the nonlinear operator.} The observation noise standard deviation, $\sigma_{\mathrm{obs}}$, is set according to the specific observation type. Observation data are collected every $\Delta t = 0.2$, which roughly corresponds to one day (24 hours) in typical atmospheric models. For each DA method, we fine-tune hyperparameters such as inflation and covariance localization through grid search, using the Gaspari–Cohn function~\cite{gaspari1999construction} for localization.  {For the direct observation case: LETKF uses inflation 1.15 and localization radius 2; IEnSF uses $\gamma=1.0$, localization radius 2, inflation 0.9, and $M=4$; EnSF uses inflation 1.0. For the $\arctan(\cdot)$ observation case: LETKF uses inflation 1.3 and localization radius 2; IEnSF uses $\gamma=0.25$, a more particle-like prior appropriate for the stronger non-Gaussianity of this case, localization radius 3, inflation 0.8, a diagonal-likelihood blend of 0.3, and $M=4$; EnSF uses inflation 1.0. For IEnSF, inflation scales the spread of the Gaussian-mixture component means rather than the forecast covariance directly, so it is not directly comparable to the LETKF inflation factor, and values below one do not correspond to ensemble deflation in the LETKF sense.}  {IEnSF uses $M = 4$ refinement iterations, consistent with the convergence observed in Figure~\ref{fig.linear.da.iter}. The dominant per-cycle cost is one eigendecomposition of the localized covariance, reused across the $N_t$ reverse-SDE steps within each iteration; IEnSF's total per-cycle cost is therefore $O(M \cdot N_t)$ score evaluations plus one covariance eigendecomposition, in contrast to LETKF's per-cycle cost of $O(d)$ local $K^3$ solves.}

\paragraph{Direct observation model}
 {We first consider the direct observation case, where $m(x) = x$ for the observed
components, with observation noise standard deviation $\sigma_{\mathrm{obs}} = 0.1$. In
addition to LETKF and IEnSF, we include the original EnSF as a baseline, together with an
ablation, IEnSF with the improved score formula but without the iterative refinement
(``IEnSF, no iteration''), which isolates the contribution of the score-formula correction
from that of the iteration. As shown in Figure~\ref{fig.l96.lin.rmse.obs}, starting from the
poor, attractor-drawn ensemble, LETKF and both IEnSF variants correct the observed dimensions
to an RMSE consistent with $\sigma_{\mathrm{obs}}$ within a few assimilation cycles. On the
unobserved dimensions, the original EnSF remains near its initial, climatological error
throughout the run, since it lacks the covariance structure needed to propagate observed
information to unobserved components. IEnSF without iteration already substantially reduces
this error using the improved score formula alone, and the full IEnSF with iterative
refinement further reduces it, closely tracking LETKF; both LETKF and full IEnSF remain
stable over the full duration of the run.}

\begin{figure}[h!]
    \centering
    \includegraphics[width=0.9\linewidth]{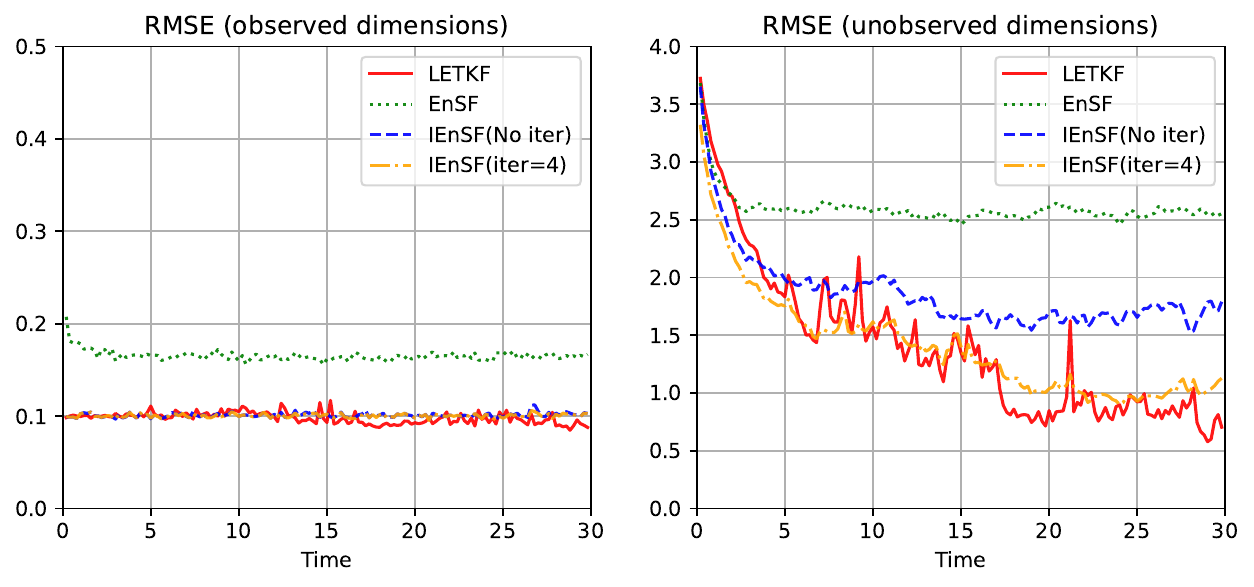}
    \caption{
     {RMSE of observed and unobserved components for the direct observation model, starting
    from a poor, attractor-drawn ensemble and averaged over 10 repetitions. LETKF and IEnSF
    (with iterative refinement) perform comparably, effectively updating both observed and
    unobserved state dimensions through cross-variable correlations, while the original EnSF
    shows little improvement on the unobserved dimensions. IEnSF without iterative refinement
    substantially improves on EnSF via the score-formula correction alone, with the iteration
    providing a further improvement.}
    }
    \label{fig.l96.lin.rmse.obs}
\end{figure}

\paragraph{$\arctan(\cdot)$ observation model}  
Next, we consider the nonlinear observation function $m(x) = \arctan(x)$ with $\sigma_{\mathrm{obs}} = 0.05$. This nonlinear observation model poses a much greater challenge for data assimilation than the direct observation case: the $\arctan$ function compresses large-magnitude values, effectively reducing the sensitivity of observations to state variations in those regions.

 {As in the direct observation case, we start from the poor, attractor-drawn ensemble, and in addition to LETKF and IEnSF we include the original EnSF as a baseline and the ``IEnSF, no iteration'' ablation. Both LETKF and IEnSF are re-tuned for this nonlinear setting; with this re-tuning, both methods remain stable over the full duration of the run. As shown in Figure~\ref{fig.l96.atan.rmse.obs}, IEnSF, with or without the iterative refinement, achieves substantially lower RMSE than both LETKF and EnSF on both the observed and unobserved dimensions, confirming the robustness to nonlinear observation operators already suggested in the direct observation case. As in that case, most of this improvement is already present without the iterative refinement, with the iteration providing a further, smaller reduction in the unobserved-dimension error.}

 {Because the Lorenz-96 system has a short spatial correlation length, it does not stress the many-correlated-variables-within-a-localization-radius regime of finely discretized PDEs such as quasi-geostrophic flows; extending the covariance-based framework to such systems requires physical-field localization and low-rank handling, which we leave to future work.}

\begin{figure}[h!]
    \centering
    \includegraphics[width=0.8\linewidth]{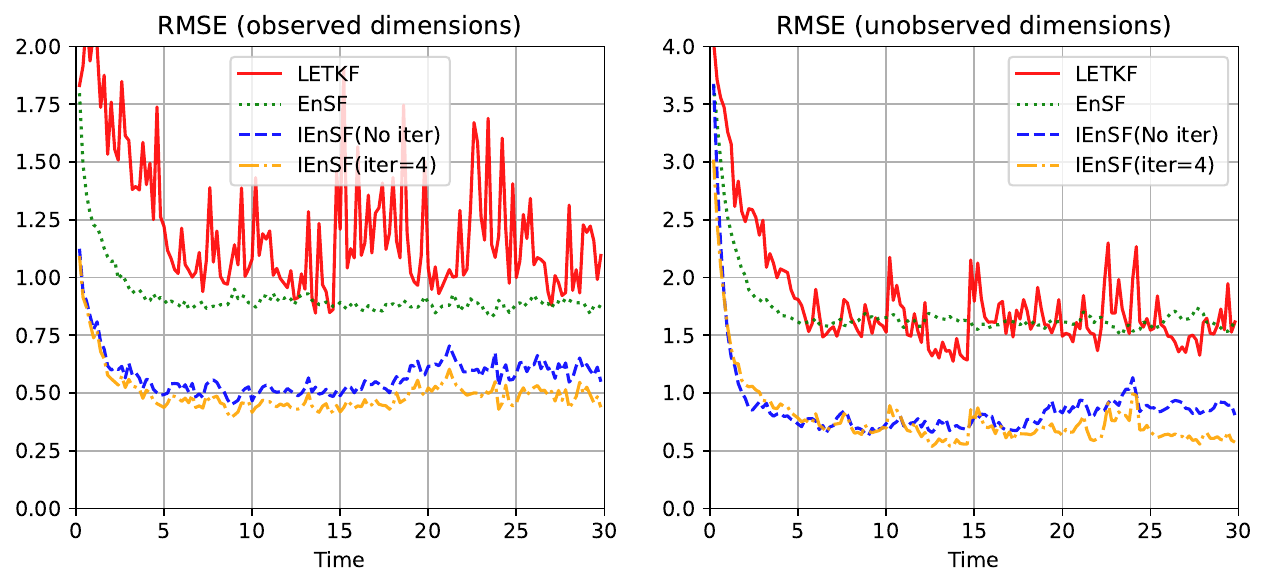}
    \caption{ {RMSE of observed and unobserved components for the nonlinear $\arctan(x)$ observation
    model, starting from a poor, attractor-drawn ensemble and averaged over 10 repetitions. IEnSF,
    with or without iterative refinement, achieves substantially lower RMSE than both LETKF and the
    original EnSF on both observed and unobserved dimensions, with the iteration providing a
    further improvement over the score-formula correction alone.}}
    \label{fig.l96.atan.rmse.obs}
\end{figure}

\section{Conclusion}\label{sec.conclusion}

In this work, we proposed the Iterative Ensemble Score Filter (IEnSF), an extension of the ensemble score filter (EnSF) designed to reduce structural errors in posterior score approximation.  
By deriving the exact posterior score expression under a Gaussian mixture prior assumption, we identified the key components required for consistent score estimation.  
We then developed practical approximation schemes for the weighting function and likelihood score, and introduced an iterative refinement strategy to correct biases during the reverse SDE sampling.  
This iterative update effectively incorporates observational information while maintaining stability in the posterior distribution.

Our numerical experiments demonstrated the advantages of IEnSF across a range of scenarios.  
In low-dimensional linear settings, IEnSF achieved posterior consistency comparable to the Kalman filter and particle filter, significantly outperforming the original EnSF.  
In nonlinear cases with non-Gaussian posteriors, IEnSF reduced bias and captured non-Gaussian features effectively, even under sparse observation operators.  
Finally, in the high-dimensional Lorenz-96 model, IEnSF achieved performance competitive with state-of-the-art methods such as LETKF, while retaining flexibility in handling nonlinearity.  
These results highlight the potential of diffusion-based ensemble methods for scalable and accurate Bayesian filtering.  
Future directions include extending IEnSF to joint state–parameter estimation, integrating adaptive schemes for likelihood score refinement,  {optimizing the initialization of the reference posterior, for example through a Kalman-family update, to accelerate the convergence of the iterative refinement,} and applying the method to large-scale geophysical models.  
More broadly, the iterative refinement framework may also be useful for other score-based generative approaches in uncertainty quantification and data assimilation.

\section*{Acknowledgment}
This material is based upon work supported by the U.S. Department of Energy, Office of Science, Office of Advanced Scientific Computing Research, Applied Mathematics program under the contract ERKJ443 at the Oak Ridge National Laboratory, which is operated by UT-Battelle, LLC, for the U.S. Department of Energy under Contract DE-AC05-00OR22725.

\bibliographystyle{elsarticle-num} 
\bibliography{ref_zzz}

\appendix
\newpage
\section{Derivation of Propositions \ref{prop.prior.trans.density} and \ref{prop.gmm.prior.score}}\label{app.prop.derivation}
In this section, we derive Propositions~\ref{prop.prior.trans.density} and~\ref{prop.gmm.prior.score}. 
We begin with the following lemma on the distributional form of the solution to the forward SDE in Eq.~\eqref{eq.forward.sde}.
\begin{lemma}[Distribution of the forward SDE]\label{lemma:zt-dist}
If $\bZ_t$ follows the forward SDE in Eq.~\eqref{eq.forward.sde}, then the marginal distribution of $\bZ_t$ is
\begin{equation}\label{eq:zt-dist}
    \bZ_t \deq \alpha_t \bZ_0 + \beta_t \bveps,
\end{equation}
where $\bveps \sim \mathcal{N}(\bzero, \bI)$ is independent of $\bZ_0$.
\end{lemma}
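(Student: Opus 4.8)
The plan is to recognize that the forward SDE in Eq.~\eqref{eq.forward.sde} is a linear (Ornstein--Uhlenbeck-type) SDE with deterministic, time-dependent coefficients, and to solve it in closed form via the integrating-factor method. Because the equation is linear in $\bZ_t$ and driven by additive Gaussian noise, the conditional law of $\bZ_t$ given $\bZ_0$ is Gaussian, so it suffices to identify the conditional mean and covariance.

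First I would introduce the integrating factor $\Phi(t) := \exp\!\left(-\int_0^t b(s)\,ds\right)$. Using $b(t) = \tfrac{d\log\alpha_t}{dt}$ from Eq.~\eqref{eq.forward.sde.coef} together with the standard normalization $\alpha_0 = 1$, this evaluates to $\Phi(t) = 1/\alpha_t$. Applying It\^o's formula to $\Phi(t)\bZ_t$ (no quadratic-variation correction arises since $f(t,z)=\Phi(t)z$ is linear in $z$), the drift cancels exactly against $\Phi'(t) = -b(t)\Phi(t)$, leaving
\begin{equation}
    d\!\left(\frac{\bZ_t}{\alpha_t}\right) = \frac{\sigma(t)}{\alpha_t}\, d\bW_t.
\end{equation}
Integrating from $0$ to $t$ and rearranging then yields
\begin{equation}
    \bZ_t = \alpha_t \bZ_0 + \alpha_t \int_0^t \frac{\sigma(s)}{\alpha_s}\, d\bW_s,
\end{equation}
where the first term is the conditional mean $\alpha_t\bZ_0$ and the second is a centered Wiener integral.

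Next I would compute the covariance of the stochastic integral by It\^o's isometry, obtaining $\alpha_t^2\,v(t)\,\bI$ with $v(t) := \int_0^t \sigma^2(s)/\alpha_s^2\,ds$, and then show $\alpha_t^2 v(t) = \beta_t^2$. This is the crux of the argument: substituting $\sigma^2(s) = \tfrac{d\beta_s^2}{ds} - 2\tfrac{d\log\alpha_s}{ds}\beta_s^2$ from Eq.~\eqref{eq.forward.sde.coef}, I would recognize that the integrand is a perfect derivative, namely $\sigma^2(s)/\alpha_s^2 = \tfrac{d}{ds}\!\left(\beta_s^2/\alpha_s^2\right)$. The integral then telescopes, and with the boundary conditions $\alpha_0 = 1$, $\beta_0 = 0$ it gives $v(t) = \beta_t^2/\alpha_t^2$, so the covariance is exactly $\beta_t^2\bI$. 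Writing the Gaussian integral as $\beta_t\bveps$ with $\bveps \sim \mN(\bzero,\bI)$ produces $\bZ_t \deq \alpha_t\bZ_0 + \beta_t\bveps$. Independence of $\bveps$ from $\bZ_0$ then follows because the It\^o integral is measurable with respect to the Brownian increments on $(0,t]$, which are independent of the initial datum $\bZ_0 \deq \bX$ under the standard assumption on the driving noise.

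The only genuine obstacle is the covariance identity $\alpha_t^2 v(t) = \beta_t^2$; everything else is routine once the integrating factor is in place. The observation that makes it clean is that the specific form of $\sigma^2(t)$ in Eq.~\eqref{eq.forward.sde.coef} is engineered precisely so that $\sigma^2/\alpha^2$ is the total derivative of $\beta^2/\alpha^2$, which is exactly the reparametrization needed to match the prescribed transition kernel $\mN(\alpha_t\bZ_0,\beta_t^2\bI)$ in Eq.~\eqref{eq.forward.conditional}.
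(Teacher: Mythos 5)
Your proof is correct and follows essentially the same route as the paper: solve the linear forward SDE in closed form to obtain $\bZ_t = \alpha_t \bZ_0 + \alpha_t \int_0^t \frac{\sigma(s)}{\alpha_s}\, d\bW_t$, then identify the stochastic integral as a centered Gaussian, independent of $\bZ_0$, with covariance $\frac{\beta_t^2}{\alpha_t^2}\bI$. The only difference is that you actually derive what the paper cites and asserts --- the solution formula via the integrating factor $1/\alpha_t$, and the covariance via It\^o's isometry together with the identity $\sigma^2(s)/\alpha_s^2 = \tfrac{d}{ds}\bigl(\beta_s^2/\alpha_s^2\bigr)$ --- which makes your version more self-contained (and implicitly corrects the paper's typo of $b(s)$ for $\sigma(s)$ in the integrand).
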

\begin{proof}
Since the forward SDE in Eq.~\eqref{eq.forward.sde} is a linear SDE, the process $\{\bZ_t\}_t$ admits a strong solution given by (see \cite{oksendal2013stochastic})
\begin{equation}\label{eq.forward.sde.solution}
    \bZ_t = \bZ_0 \alpha(t) + \alpha(t) \int_0^t \frac{b(s)}{\alpha(s)} d\bW_s,
\end{equation}
where $\bW_s$ is a standard Brownian motion independent of $\bZ_0$.  
The Itô integral term $\int_0^t \frac{b(s)}{\alpha(s)} d\bW_s$ follows a Gaussian distribution with
\begin{align}
    \mathbb{E}\!\left[\int_0^t \frac{b(s)}{\alpha(s)} d\bW_s \right] &= \bzero,\\
    \mathrm{Cov}\!\left[\int_0^t \frac{b(s)}{\alpha(s)} d\bW_s \right] &= \frac{\beta^2(t)}{\alpha^2(t)} \bI.
\end{align}
Therefore, the marginal distribution of $\bZ_t$ can be expressed as
\begin{equation}
    \bZ_t \deq \alpha_t \bZ_0 + \beta_t \bveps,
\end{equation}
where $\bveps \sim \mathcal{N}(\bzero, \bI)$ is independent of $\bZ_0$.  
This establishes the desired distribution of $\bZ_t$.
\end{proof}

This result also gives the forward transitional distribution as follows
\begin{equation}
    p_{\bZ_t | \bZ_0}(\bz_t | \bz_0) = \mN(\alpha_t \bz_0, \, \beta_t^2 \bI_d).
\end{equation}
We next present two lemmas that facilitate the computation of the score function.

\begin{lemma}[Gradient of exponentiated functions]\label{lemma:exp-fun-grad}
For a function of the form $g(\bx) = c \exp(f(\bx))$, where $c$ is a constant, we have
\begin{equation}
    \nabla_{\bx} g(\bx) = g(\bx) \nabla_{\bx} \log g(\bx) = g(\bx) \bS_g(\bx),
\end{equation}
where $\bS_g(\bx) = \nabla_{\bx} \log g(\bx)$ is the score function of $g(\bx)$.
\end{lemma}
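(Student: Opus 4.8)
The plan is to prove the identity by a direct application of the chain rule, since the statement is precisely the log-derivative (or ``score'') trick for an exponentiated function. First I would compute the gradient of $g$ directly. Writing $g(\bx) = c\exp(f(\bx))$ and applying the chain rule to the exponential gives
\begin{equation*}
\nabla_{\bx} g(\bx) = c\exp(f(\bx))\,\nabla_{\bx} f(\bx) = g(\bx)\,\nabla_{\bx} f(\bx),
\end{equation*}
where the second equality simply re-substitutes the definition of $g$.

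Next I would compute $\nabla_{\bx}\log g(\bx)$ and verify that it coincides with $\nabla_{\bx} f(\bx)$. Provided $c>0$ (so that $g>0$ and the logarithm is well defined), we have $\log g(\bx) = \log c + f(\bx)$, and since $\log c$ is constant its gradient vanishes, giving $\nabla_{\bx}\log g(\bx) = \nabla_{\bx} f(\bx)$. Combining the two computations yields
\begin{equation*}
\nabla_{\bx} g(\bx) = g(\bx)\,\nabla_{\bx} f(\bx) = g(\bx)\,\nabla_{\bx}\log g(\bx) = g(\bx)\,\bS_g(\bx),
\end{equation*}
which is exactly the claimed identity.

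There is no genuine obstacle here: the result is an immediate consequence of the chain rule, and the only point requiring care is the implicit positivity assumption $c>0$ needed for $\log g$ to be defined, a condition automatically met in the intended applications, where $g$ plays the role of an (unnormalized) probability density. The value of the lemma lies not in its difficulty but in packaging the log-derivative identity so that, in the subsequent score-function derivations, one may pass freely between differentiating a Gaussian-type density and differentiating its log-density.
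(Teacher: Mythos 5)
Your proof is correct and follows essentially the same route as the paper's: a direct application of the chain rule to $c\exp(f(\bx))$, identifying $\nabla_{\bx} f$ with $\nabla_{\bx}\log g$. Your explicit remark on the positivity of $c$ (so that $\log g$ is well defined) is a minor refinement the paper leaves implicit, but the argument is the same.
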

\begin{proof}
\begin{align*}
    \nabla_{\bx} g(\bx) &= c \nabla_{\bx} \exp(f(\bx)) = c \exp(f(\bx)) \nabla_{\bx} f(\bx) = g(\bx) \nabla_{\bx} \log g(\bx).
\end{align*}
\end{proof}
\begin{lemma}[Gaussian score function]\label{lemma:gaussian-score}
Let $\bX \sim \mathcal{N}(\bmu, \bSigma)$. Then the Gaussian score function 
$\bS_{\mathcal{N}}(\bx; \bmu, \bSigma) \equiv \nabla_{\bx} \log p_{\bX}(\bx)$ 
is given by
\begin{equation}
    \bS_{\mathcal{N}}(\bx; \bmu, \bSigma) = -\bSigma^{-1} (\bx - \bmu).
\end{equation}
\end{lemma}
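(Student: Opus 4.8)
The plan is to verify this standard identity by direct differentiation of the log-density. First I would write out the Gaussian density $p_{\bX}(\bx) = (2\pi)^{-d/2}|\bSigma|^{-1/2}\exp\!\big(-\tfrac{1}{2}(\bx-\bmu)^\top\bSigma^{-1}(\bx-\bmu)\big)$ and take its logarithm, which splits into a normalization term $-\tfrac{d}{2}\log(2\pi)-\tfrac{1}{2}\log|\bSigma|$ that is independent of $\bx$, plus the quadratic term $-\tfrac{1}{2}(\bx-\bmu)^\top\bSigma^{-1}(\bx-\bmu)$. Upon applying $\nabla_{\bx}$, the constant term vanishes, so the entire score comes from the quadratic form.

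A cleaner route, which I would prefer here, is to invoke Lemma~\ref{lemma:exp-fun-grad} directly. Since $p_{\bX}$ has the form $c\exp(f(\bx))$ with $c=(2\pi)^{-d/2}|\bSigma|^{-1/2}$ constant and $f(\bx)=-\tfrac{1}{2}(\bx-\bmu)^\top\bSigma^{-1}(\bx-\bmu)$, that lemma gives immediately $\bS_{\mN}(\bx;\bmu,\bSigma)=\nabla_{\bx}\log p_{\bX}(\bx)=\nabla_{\bx}f(\bx)$, so the normalization constant drops out without ever needing to expand it. This keeps the argument self-contained within the appendix, reusing the result proved just above.

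The only computation with any content is the gradient of the quadratic form. I would apply the matrix-calculus identity $\nabla_{\bx}\big[(\bx-\bmu)^\top A(\bx-\bmu)\big]=(A+A^\top)(\bx-\bmu)$ with $A=\bSigma^{-1}$. The key---and essentially only---point requiring care is that $\bSigma^{-1}$ is symmetric, which holds because $\bSigma$ is a symmetric covariance matrix and the inverse of a symmetric matrix is symmetric. Hence $A+A^\top=2\bSigma^{-1}$, giving $\nabla_{\bx}f(\bx)=-\tfrac{1}{2}\cdot 2\bSigma^{-1}(\bx-\bmu)=-\bSigma^{-1}(\bx-\bmu)$, which is the claimed expression. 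There is no genuine obstacle in this lemma; it is a routine computation whose only subtlety is the symmetry argument that collapses $A+A^\top$ to $2\bSigma^{-1}$.
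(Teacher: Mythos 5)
Your proposal is correct and follows essentially the same route as the paper: write the Gaussian density as a constant times an exponential, note the normalization term vanishes under $\nabla_{\bx}$, and differentiate the quadratic form to get $-\bSigma^{-1}(\bx-\bmu)$. You are merely more explicit than the paper on two points it leaves implicit — invoking Lemma~\ref{lemma:exp-fun-grad} by name and spelling out that symmetry of $\bSigma^{-1}$ collapses $(A+A^\top)$ to $2\bSigma^{-1}$ — which is fine but not a different argument.
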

\begin{proof}
By definition, 
\[
    p_{\bX}(\bx) = c \exp\!\left(-\tfrac{1}{2}(\bx - \bmu)^\top \bSigma^{-1} (\bx - \bmu)\right),
\]
where $c$ is a normalizing constant. Taking the gradient gives
\begin{align*}
    \nabla_{\bx} \log p_{\bX}(\bx) 
    &= \nabla_{\bx} \left[\log c - \tfrac{1}{2} (\bx - \bmu)^\top \bSigma^{-1} (\bx - \bmu)\right] = -\bSigma^{-1} (\bx - \bmu).
\end{align*}
\end{proof}

We next establish the form of the transitional kernel when the initial distribution is a single Gaussian.  
The following lemma shows that the forward SDE preserves Gaussianity over time and that both the forward and reverse conditional distributions admit closed-form expressions.

\begin{lemma}[Single Gaussian transitional kernel]\label{lemma.single.gaussian.trans}
Let $\bZ_t$ follow the forward SDE in Eq.~\eqref{eq.forward.sde}, and suppose the initial distribution is $\bZ_0 \sim \mathcal{N}(\bmu, \bSigma)$.  
Then the marginal distribution at time $t$ is Gaussian:
\begin{equation}
    p_{\bZ_t}(\bz_t) = \mathcal{N}(\bmu_t, \bSigma_t),
\end{equation}
where the forward mean and covariance are given by
\begin{align}
    \bmu_t &= \alpha_t \bmu, \\
    \bSigma_t &= \alpha_t^2 \bSigma + \beta_t^2 \bI.
\end{align}

Furthermore, the corresponding reverse (conditional) distribution is also Gaussian:
\begin{equation}
    p_{\bZ_0 | \bZ_t}(\bz_0 | \bz_t) = \mathcal{N}\big(\bmu_{0|t}(\bz_t), \bSigma_{0|t}\big),
\end{equation}
where the conditional mean and covariance are given by
\begin{align}
    \bmu_{0|t}(\bz_t) &= \bmu 
    + \alpha_t \bSigma \left(\alpha_t^2 \bSigma + \beta_t^2 \bI\right)^{-1} 
    (\bz_t - \alpha_t \bmu), \\
    \bSigma_{0|t} &= 
    \bSigma - \alpha_t^2 \bSigma \left(\alpha_t^2 \bSigma + \beta_t^2 \bI\right)^{-1} \bSigma.
\end{align}
\end{lemma}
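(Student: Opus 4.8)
The plan is to reduce everything to a finite-dimensional Gaussian-conditioning computation, exploiting the explicit solution of the forward SDE already established in Lemma~\ref{lemma:zt-dist}. That lemma gives the representation $\bZ_t \deq \alpha_t \bZ_0 + \beta_t \bveps$ with $\bveps \sim \mathcal{N}(\bzero,\bI)$ independent of $\bZ_0$. Since $\bZ_0 \sim \mathcal{N}(\bmu,\bSigma)$ is Gaussian and $\bveps$ is an independent Gaussian, the pair $(\bZ_0,\bZ_t)$ is jointly Gaussian, being an affine image of the Gaussian vector $(\bZ_0,\bveps)$. All four stated formulas then follow by reading off first and second moments and applying the standard conditioning rule for jointly Gaussian vectors.

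For the forward marginal, I would simply take expectations and covariances in the representation $\bZ_t = \alpha_t\bZ_0 + \beta_t\bveps$. Independence of $\bZ_0$ and $\bveps$ gives $\bbE[\bZ_t] = \alpha_t\bmu$ and $\mathrm{Cov}(\bZ_t) = \alpha_t^2\,\mathrm{Cov}(\bZ_0) + \beta_t^2\,\mathrm{Cov}(\bveps) = \alpha_t^2\bSigma + \beta_t^2\bI$, which are exactly $\bmu_t$ and $\bSigma_t$. Because any affine image of a Gaussian is Gaussian, this identifies the marginal law as $\mathcal{N}(\bmu_t,\bSigma_t)$.

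For the reverse conditional, I would assemble the joint covariance structure of $(\bZ_0,\bZ_t)$. The only nontrivial block is the cross-covariance $\mathrm{Cov}(\bZ_0,\bZ_t) = \mathrm{Cov}(\bZ_0,\alpha_t\bZ_0+\beta_t\bveps) = \alpha_t\bSigma$, again using independence of $\bveps$ from $\bZ_0$. Substituting the blocks $\mathrm{Cov}(\bZ_0)=\bSigma$, $\mathrm{Cov}(\bZ_0,\bZ_t)=\alpha_t\bSigma$, and $\mathrm{Cov}(\bZ_t)=\alpha_t^2\bSigma+\beta_t^2\bI$ into the Gaussian conditioning identities
\begin{equation*}
\bmu_{0|t}(\bz_t) = \bmu + \mathrm{Cov}(\bZ_0,\bZ_t)\,\mathrm{Cov}(\bZ_t)^{-1}(\bz_t - \alpha_t\bmu), \quad \bSigma_{0|t} = \bSigma - \mathrm{Cov}(\bZ_0,\bZ_t)\,\mathrm{Cov}(\bZ_t)^{-1}\mathrm{Cov}(\bZ_t,\bZ_0),
\end{equation*}
yields precisely the claimed $\bmu_{0|t}(\bz_t)$ and $\bSigma_{0|t}$.

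As an alternative that avoids quoting the conditioning formula, one can apply Bayes' rule, $p_{\bZ_0|\bZ_t}(\bz_0|\bz_t) \propto p_{\bZ_t|\bZ_0}(\bz_t|\bz_0)\,p_{\bZ_0}(\bz_0)$, where the forward kernel is the Gaussian $\mathcal{N}(\alpha_t\bz_0,\beta_t^2\bI)$ from Lemma~\ref{lemma:zt-dist} and the prior is $\mathcal{N}(\bmu,\bSigma)$, then complete the square in $\bz_0$. I expect the main (and essentially only) obstacle to be this matrix bookkeeping: collecting the quadratic terms gives the inverse conditional covariance $\bSigma_{0|t}^{-1} = \bSigma^{-1} + \alpha_t^2\beta_t^{-2}\bI$, and one must verify via a Woodbury-type identity that this matches the stated $\bSigma_{0|t} = \bSigma - \alpha_t^2\bSigma(\alpha_t^2\bSigma+\beta_t^2\bI)^{-1}\bSigma$, and likewise that the resulting linear term reproduces $\bmu_{0|t}(\bz_t)$. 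There is no conceptual difficulty beyond this algebra, so I would present the joint-Gaussian route as the primary argument and keep the Bayes / complete-the-square computation as a consistency check.
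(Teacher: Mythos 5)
Your proposal is correct and follows essentially the same route as the paper's proof: both invoke Lemma~\ref{lemma:zt-dist} to write $\bZ_t \deq \alpha_t \bZ_0 + \beta_t \bveps$, deduce joint Gaussianity of $(\bZ_0,\bZ_t)$ as an affine image of the Gaussian vector $(\bZ_0,\bveps)$, compute the moment blocks (including the cross-covariance $\alpha_t\bSigma$), and apply the standard Gaussian conditioning identities. The Bayes'/complete-the-square alternative you mention is a fine consistency check but is not needed, exactly as you anticipated.
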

\begin{proof}
From Lemma~\ref{lemma:zt-dist}, the marginal distribution of $\bZ_t$ satisfies
\begin{equation}\label{eq.ztz0.relation.adpx}
    \bZ_t \deq \alpha_t \bZ_0 + \beta_t \bveps,
\end{equation}
where $\bveps \sim \mathcal{N}(\bzero, \bI_d)$ is independent of $\bZ_0$.  
Given that $\bZ_0 \sim \mathcal{N}(\bmu, \bSigma)$, it follows directly that $\bZ_t$ is also Gaussian with mean and covariance
\begin{align}
    \bmu_t &= \alpha_t \bmu, \\
    \bSigma_t &= \alpha_t^2 \bSigma + \beta_t^2 \bI_d.
\end{align}

To derive $p_{\bZ_0 | \bZ_t}(\bz_0 | \bz_t)$, we first note that $\bZ_0$ is Gaussian and $\bZ_t$ is linearly related to $\bZ_0$ through Eq.~\eqref{eq.ztz0.relation.adpx}.  
By the properties of linear transformations of Gaussian variables, it follows that both the joint distribution $p_{\bZ_0, \bZ_t}(\bz_0, \bz_t)$ and the conditional distribution $p_{\bZ_0 | \bZ_t}(\bz_0 | \bz_t)$ are Gaussian.  
To make this explicit, we express the joint relationship between $\bZ_0$ and $\bZ_t$ as
\begin{equation}\label{eq:joint-eq-z0-zt}
\begin{bmatrix}
\bZ_0\\[3pt]
\bZ_t
\end{bmatrix}
=
\begin{bmatrix}
\bI_d & \bzero\\[3pt]
\alpha_t \bI_d & \beta_t \bI_d
\end{bmatrix}
\begin{bmatrix}
\bZ_0\\[3pt]
\bveps
\end{bmatrix},
\end{equation}
where $\bI_d \in \mathbb{R}^{d\times d}$ is the identity matrix and $\bzero \in \mathbb{R}^{d\times d}$ is the zero matrix.  
From Lemma~\ref{lemma:zt-dist}, we have $\bveps \sim \mathcal{N}(\bzero, \bI_d)$ and is independent of $\bZ_0$.  
Hence, the joint distribution of $[\bZ_0, \bveps]^\top$ is
\begin{equation}\label{eq:z0.eps.joint.gaussian}
\begin{bmatrix}
\bZ_0\\[3pt]
\bveps
\end{bmatrix}
\sim
\mathcal{N}\!\left(
\begin{bmatrix}
\bmu\\[3pt]
\bzero
\end{bmatrix},
\begin{bmatrix}
\bSigma & \bzero\\[3pt]
\bzero & \bI_d
\end{bmatrix}
\right).
\end{equation}
Equation~\eqref{eq.ztz0.relation.adpx} defines a linear transformation of the jointly Gaussian vector $[\bZ_0, \bveps]^\top$ in Eq.~\eqref{eq:z0.eps.joint.gaussian}.  
Therefore, $[\bZ_0, \bZ_t]^\top$ is also jointly Gaussian with
\begin{equation}
\begin{bmatrix}
\bZ_0\\[3pt]
\bZ_t
\end{bmatrix}
\sim
\mathcal{N}\!\left(
\begin{bmatrix}
\bmu\\[3pt]
\alpha_t \bmu
\end{bmatrix},
\begin{bmatrix}
\bSigma & \alpha_t \bSigma\\[3pt]
\alpha_t \bSigma & \alpha_t^2 \bSigma + \beta_t^2 \bI_d
\end{bmatrix}
\right).
\end{equation}

Given this joint Gaussian distribution, the conditional distribution $p_{\bZ_0 | \bZ_t}(\bz_0 | \bz_t)$ is also Gaussian:
\begin{equation}
    p_{\bZ_0 | \bZ_t}(\bz_0 | \bz_t) = \mathcal{N}\!\big( \bmu_{0|t}(\bz_t), \bSigma_{0|t} \big),
\end{equation}
where
\begin{align}
    \bmu_{0|t}(\bz_t) &= \bmu + \alpha_t \bSigma \left( \alpha_t^2 \bSigma + \beta_t^2 \bI_d \right)^{-1} (\bz_t - \alpha_t \bmu), \\
    \bSigma_{0|t} &= \bSigma - \alpha_t^2 \bSigma \left( \alpha_t^2 \bSigma + \beta_t^2 \bI_d \right)^{-1} \bSigma.
\end{align}
\end{proof}

\noindent Next, we can derive Proposition~\ref{prop.prior.trans.density}.
\begin{proof}
From the assumption on the initial Gaussian mixture prior, we have
\begin{equation}\label{eq.gmm.initial.apdx}
    p_{\bZ_0}(\bz_0) = \sum_{k=1}^K \pi_k \, \phi(\bz_0; \bmu_k, \bSigma),
\end{equation}
where $\phi(\cdot; \bmu_k, \bSigma)$ denotes the Gaussian density with mean $\bmu_k$ and covariance $\bSigma$, and $\pi_k$ is the mixture weight of component $k$.  
Let $\xi \in \{1, \dots, K\}$ denote the latent variable indicating the Gaussian mixture weight.

For the forward marginal density, by the law of total probability, we can write
\begin{align}
    p_{\bZ_t}(\bz_t) 
    &= \sum_{k=1}^K p_{\bZ_t | \xi}(\bz_t | k) \, p_{\xi}(k) = \sum_{k=1}^K p_{\xi}(k) \int p_{\bZ_t | \bZ_0}(\bz_t | \bz_0) \, p_{\bZ_0 | \xi}(\bz_0 | k) \, d\bz_0. \label{eq.forward.mixture.marginal}
\end{align}
Since conditioning on $\xi = k$ fixes the initial component $p_{\bZ_0 | \xi}(\bz_0 | k)$ as Gaussian $\mN(\bmu_k, \bSigma)$, and the transition density $p_{\bZ_t | \bZ_0}(\bz_t | \bz_0)$ of the forward SDE does not depend on $\xi$, the marginal $p_{\bZ_t | \xi}(\bz_t | k)$ corresponds to the forward SDE marginal with $\mN(\bmu_k, \bSigma)$ as initial condition.  
From Lemma~\ref{lemma.single.gaussian.trans}, we therefore obtain
\begin{equation}
    p_{\bZ_t}(\bz_t) = \sum_{k=1}^K p_{\xi}(k)\, \phi(\bz_t; \bmu_{t,k}, \bSigma_{t}),
\end{equation}
where each Gaussian component evolves according to
\begin{align}
    \bmu_{t,k} &= \alpha_t \bmu_k, \label{eq.gmm.forward.mean.apdx}\\
    \bSigma_{t} &= \alpha_t^2 \bSigma + \beta_t^2 \bI_d. \label{eq.gmm.forward.cov.apdx}
\end{align}

For the reverse kernel $p_{\bZ_0 | \bZ_t}(\bz_0 | \bz_t)$, we apply a similar conditioning argument using $\xi$ as the latent variable:
\begin{align}
    p_{\bZ_0 | \bZ_t}(\bz_0 | \bz_t) 
    &= \sum_{k=1}^K p_{\bZ_0 | \bZ_t, \xi}(\bz_0 | \bz_t, k) \, p_{\xi | \bZ_t}(k | \bz_t). \label{eq.reverse.kernel.mixture}
\end{align}

Conditioned on $\xi = k$, the pair $(\bZ_0, \bZ_t)$ follows a joint Gaussian distribution, and by Lemma~\ref{lemma.single.gaussian.trans} the conditional density takes the form
\begin{equation}
    p_{\bZ_0 | \bZ_t, \xi}(\bz_0 | \bz_t, k)
    = \phi(\bz_0; \bmu_{0|t,k}(\bz_t), \bSigma_{0|t}),
\end{equation}
where
\begin{align}
    \bmu_{0|t,k}(\bz_t) &= \bmu_k + \alpha_t \bSigma \big( \alpha_t^2 \bSigma + \beta_t^2 \bI_d \big)^{-1} (\bz_t - \alpha_t \bmu_k), \\
    \bSigma_{0|t} &= \bSigma - \alpha_t^2 \bSigma \big( \alpha_t^2 \bSigma + \beta_t^2 \bI_d \big)^{-1} \bSigma.
\end{align}

The posterior mixture weights are obtained by applying Bayes' rule:
\begin{align}
    p_{\xi | \bZ_t}(k | \bz_t) 
    &= \frac{p_{\bZ_t | \xi}(\bz_t | k)\, p_{\xi}(k)}{p_{\bZ_t}(\bz_t)} 
    = \frac{\pi_k \, \phi(\bz_t; \bmu_{t,k}, \bSigma_{t})}
    {\sum_{j=1}^K \pi_j \, \phi(\bz_t; \bmu_{t,j}, \bSigma_{t})}. \label{eq.p.xi.zt.apdx}
\end{align}

Combining Eqs.~\eqref{eq.reverse.kernel.mixture}–\eqref{eq.p.xi.zt.apdx} yields the desired Gaussian mixture form of the reverse transition kernel, completing the proof.
\end{proof}

\noindent Next, we derive the score function of the prior process $\bZ_t$ in Proposition~\ref{prop.gmm.prior.score}.

\begin{proof}
From Proposition~\ref{prop.prior.trans.density}, the marginal distribution of $\bZ_t$ is a Gaussian mixture given by
\begin{equation}
    p_{\bZ_t}(\bz_t) = \sum_{k=1}^K p_{\xi}(k)\, \phi(\bz_t; \bmu_{t,k}, \bSigma_{t}),
\end{equation}
where each component has mean $\bmu_{t,k} = \alpha_t \bmu_k$ and covariance $\bSigma_{t} = \alpha_t^2 \bSigma + \beta_t^2 \bI_d$.

By definition, the score function of $p_{\bZ_t}(\bz_t)$ is
\begin{equation}\label{eq.prior.score.def.apdx}
    \bS_{\bZ_t}(\bz_t) 
    = \nabla_{\bz_t} \log p_{\bZ_t}(\bz_t)
    = \frac{\nabla_{\bz_t} p_{\bZ_t}(\bz_t)}{p_{\bZ_t}(\bz_t)}.
\end{equation}

We first compute the gradient of the mixture density:
\begin{align}
    \nabla_{\bz_t} p_{\bZ_t}(\bz_t)
    &= \nabla_{\bz_t} \sum_{k=1}^K p_{\xi}(k)\, \phi(\bz_t; \bmu_{t,k}, \bSigma_{t}) = \sum_{k=1}^K p_{\xi}(k)\, \nabla_{\bz_t} \phi(\bz_t; \bmu_{t,k}, \bSigma_{t}) \nonumber\\
    &= \sum_{k=1}^K p_{\xi}(k)\, \phi(\bz_t; \bmu_{t,k}, \bSigma_{t})\, \bS_{\bZ_t | k}(\bz_t), \label{eq.grad.mix}
\end{align}
where the last equality follows from Lemma~\ref{lemma:exp-fun-grad}, since the Gaussian PDF belongs to the exponential family.  
Here, $\bS_{\bZ_t | k}(\bz_t)$ denotes the score of the $k$-th Gaussian component,
\begin{equation}
    \bS_{\bZ_t | k}(\bz_t) = -\bSigma_t^{-1}(\bz_t - \bmu_{t,k}).
\end{equation}

Substituting Eq.~\eqref{eq.grad.mix} into Eq.~\eqref{eq.prior.score.def.apdx}, we obtain
\begin{align}
    \bS_{\bZ_t}(\bz_t)
    &= \frac{\sum_{k=1}^K p_{\xi}(k)\, \phi(\bz_t; \bmu_{t,k}, \bSigma_{t})\, \bS_{\bZ_t | k}(\bz_t)}
    {p_{\bZ_t}(\bz_t)} 
    = \sum_{k=1}^K \frac{p_{\xi}(k)\, p_{\bZ_t | \xi}(\bz_t | k)}
    {p_{\bZ_t}(\bz_t)}\, \bS_{\bZ_t | k}(\bz_t) \nonumber\\
    &= \sum_{k=1}^K p_{\xi | \bZ_t}(k | \bz_t)\, \bS_{\bZ_t | k}(\bz_t), \label{eq.gmm.score.final}
\end{align}
where the posterior mixture weights $p_{\xi | \bZ_t}(k | \bz_t)$ are given by Eq.~\eqref{eq.p.xi.zt.apdx}.  

Thus, the score function of the Gaussian mixture prior process $\bZ_t$ is a weighted average of the individual component scores, with weights given by $p_{\xi | \bZ_t}(k | \bz_t)$.
\end{proof}

\section{Derivation of the posterior score}\label{app.post.score.derivation}
We now derive the posterior score function stated in Theorem~\ref{thm.post.score.expression} under Assumption~\ref{assump.gmm}.  
By definition, the posterior score function corresponds to the score of the forward diffusion process $\bZ_t^{\bY}$ whose initial distribution is the posterior $p_{\bX|\bY}$ and which evolves according to the forward SDE in Eq.~\eqref{eq.forward.sde}.  

Let $\bZ_t$ denote the forward process associated with the prior distribution, whose initial condition follows the Gaussian mixture model (GMM) given in Assumption~\ref{assump.gmm}, i.e.,
\[
p_{\bZ_0}(\bz_0) = \sum_{k=1}^K \pi_k \, \phi(\bz_0; \bmu_k, \bSigma),
\]
and evolves under the same forward SDE in Eq.~\eqref{eq.forward.sde}.  
From Eq.~\eqref{eq:zzt}, we can write
\[
p_{\bZ_t^{\bY}}(\bz_t, \by) = p_{\bZ_t | \bY}(\bz_t, \by),
\]
so that the target posterior score function is given by
\begin{equation}\label{eq.post.score.def.apdx}
S_{\bZ^{\bY}_t}(\bz_t | \by) 
= \frac{\nabla_{\bz_t} p_{\bZ_t|\bY}(\bz_t|\by)}{p_{\bZ_t|\bY}(\bz_t|\by)}.
\end{equation}

To compute $p_{\bZ_t|\bY}(\bz_t|\by)$, we apply the law of total probability over the latent variable $\xi$, which defines the  mixture probability of the GMM from Eq.~\eqref{eq.gmm.prior.def}, and we have
\begin{equation}\label{eq:expand-by-xi}
p_{\bZ_t|\bY}(\bz_t | \by) 
= \sum_{k=1}^K p_{\bZ_t|\bY,\xi}(\bz_t | \by, k)\, p_{\xi|\bY}(k | \by),
\end{equation}
where $p_{\xi|\bY}(k|\by) = \mathbb{P}(\xi=k | \bY=\by)$ denotes the conditional probability of drawing from the $k$-th Gaussian component given $\bY=\by$.  

Applying Bayes’ theorem to $p_{\bZ_t|\bY,\xi}(\bz_t | \by, k)$ in Eq.~\eqref{eq:expand-by-xi}, we can rewrite Eq.~\eqref{eq:expand-by-xi} as
\begin{align}
p_{\bZ_t|\bY}(\bz_t | \by)
&= \sum_{k=1}^K 
\frac{p_{\bY|\bZ_t,\xi}(\by|\bz_t,k)\, p_{\bZ_t|\xi}(\bz_t|k)}{p_{\bY|\xi}(\by|k)}\, p_{\xi|\bY}(k|\by) \\
&= \sum_{k=1}^K 
p_{\bY|\bZ_t,\xi}(\by|\bz_t,k)\, p_{\bZ_t|\xi}(\bz_t|k)\, \frac{p_{\xi|\bY}(k|\by)}{p_{\bY|\xi}(\by|k)} \\
&= \sum_{k=1}^K 
p_{\bY|\bZ_t,\xi}(\by|\bz_t,k)\, p_{\bZ_t|\xi}(\bz_t|k)\, \frac{p_{\xi}(k)}{p_{\bY}(\by)},\label{eq.pzt.y.expand1}
\end{align}
where we used the identity
\[
\frac{p_{\xi|\bY}(k|\by)}{p_{\bY|\xi}(\by|k)} = \frac{p_{\xi}(k)}{p_{\bY}(\by)}. 
\]

Since $\frac{p_{\xi}(k)}{p_{\bY}(\by)}$ is independent of $\bz_t$, taking the gradient of both sides of Eq.~\eqref{eq.pzt.y.expand1} with respect to $\bz_t$ gives
\begin{equation}\label{eq:posterior-pdf-grad}
\begin{aligned}
\nabla_{\bz_t} p_{\bZ_t|\bY}(\bz_t | \by)
= \sum_{k=1}^K 
\Big[\, 
&\nabla_{\bz_t} p_{\bY|\bZ_t,\xi}(\by|\bz_t,k)\, p_{\bZ_t|\xi}(\bz_t|k) \\
&+ p_{\bY|\bZ_t,\xi}(\by|\bz_t,k)\, \nabla_{\bz_t} p_{\bZ_t|\xi}(\bz_t|k)
\,\Big]
\frac{p_{\xi}(k)}{p_{\bY}(\by)}.
\end{aligned}
\end{equation}

Finally, substituting Eq.~\eqref{eq:posterior-pdf-grad} into the definition of the score function in Eq.~\eqref{eq.post.score.def.apdx} yields
\begin{equation}\label{eq:final-score-1}
S_{\bZ_t | \bY}(\bz_t | \by)
= 
\underbrace{\sum_{k=1}^K 
\frac{p_{\bY|\bZ_t,\xi}(\by|\bz_t,k)\, p_{\xi}(k)}
{p_{\bZ_t|\bY}(\bz_t|\by)\, p_{\bY}(\by)}\,
\nabla_{\bz_t} p_{\bZ_t|\xi}(\bz_t|k)}_{I_1}
+
\underbrace{\sum_{k=1}^K 
\frac{p_{\bZ_t|\xi}(\bz_t|k)\, p_{\xi}(k)}
{p_{\bZ_t|\bY}(\bz_t|\by)\, p_{\bY}(\by)}\,
\nabla_{\bz_t} p_{\bY|\bZ_t,\xi}(\by|\bz_t,k)}_{I_2}.
\end{equation}

We next simplify the two terms $I_1$ and $I_2$ in Eq.~\eqref{eq:final-score-1} to obtain a more compact and interpretable expression for the posterior score function.

\subsection{Simplification of $I_1$}
From Lemma~\ref{lemma.single.gaussian.trans}, the marginal prior density of the $k$-th Gaussian component at pseudo-time $t$ is given by
\[
p_{\bZ_t | \xi} (\bz_t | k) = \phi(\bz_t; \alpha_t \bmu_k,\, \beta_t^2 \bI_d + \alpha_t^2 \bSigma).
\]
Using the gradient of a Gaussian density, we obtain
\begin{equation}\label{eq:prior-cond-grad}
\nabla_{\bz} p_{\bZ_t | \xi} (\bz_t | k)
= p_{\bZ_t | \xi} (\bz_t | k)\, S_{\bZ_t|\xi}(\bz_t | k),
\end{equation}
where
\begin{equation}\label{eq:prior-score-gmm}
S_{\bZ_t|\xi}(\bz_t | k)
= -\big(\beta_t^2 \bI_d + \alpha_t^2 \bSigma \big)^{-1} \big(\bz_t - \alpha_t \bmu_k\big)
\end{equation}
is the score function of the Gaussian component $p_{\bZ_t | \xi} (\cdot | k)$.

Substituting Eq.~\eqref{eq:prior-cond-grad} into the expression of $I_1$ from Eq.~\eqref{eq:final-score-1}, we obtain
\begin{equation}\label{eq:I1-final}
\begin{aligned}
I_1
&= \sum_{k=1}^K 
\frac{p_{\bY | \bZ_t, \xi}(\by | \bz_t, k)\, p_{\xi}(k)}{p_{\bZ_t | \bY}(\bz_t | \by)\, p_{\bY}(\by)} 
\nabla_{\bz} p_{\bZ_t | \xi} (\bz_t | k) \\[4pt]
&= \sum_{k=1}^K 
\frac{p_{\bY | \bZ_t, \xi}(\by | \bz_t, k)\, p_{\bZ_t | \xi}(\bz_t | k)\, p_{\xi}(k)}
{p_{\bZ_t | \bY}(\bz_t | \by)\, p_{\bY}(\by)} 
S_{\bZ_t|\xi}(\bz_t | k) \\[4pt]
&= \sum_{k=1}^K 
\frac{p_{\xi, \bZ_t, \bY}(k, \bz_t, \by)}{p_{\bZ_t, \bY}(\bz_t, \by)}\, S_{\bZ_t|\xi}(\bz_t | k) \\[4pt]
&= \sum_{k=1}^K 
p_{\xi | \bZ_t, \bY}(k | \bz_t, \by)\, S_{\bZ_t|\xi}(\bz_t | k).
\end{aligned}
\end{equation}

Eq.~\eqref{eq:I1-final} represents the first component of the posterior score in Eq.~\eqref{eq.post.score.expression.final}, where each prior Gaussian component score function $S_{\bZ_t|\xi}(\bz_t | k)$ is weighted by its posterior responsibility $p_{\xi | \bZ_t, \bY}(k | \bz_t, \by)$.

\subsection{Simplification of $I_2$}\label{sec:I2}

To simplify $I_2$, we first handle the gradient term $\nabla_{\bz_t} p_{\bY | \bZ_t, \xi}(\by | \bz_t, k)$.  
By introducing $\bZ_0$ as an intermediate variable, the conditional density $p_{\bY | \bZ_t, \xi}(\by | \bz_t, k)$ can be expressed as
\begin{equation}\label{eq:y-zt-xi-decom}
\begin{aligned}
p_{\bY | \bZ_t, \xi}(\by | \bz_t, k)
&= \int p_{\bY | \bZ_0, \bZ_t, \xi}(\by | \bz_0, \bz_t, k)\, p_{\bZ_0 | \bZ_t, \xi}(\bz_0 | \bz_t, k)\, d \bz_0 \\
&= \int p_{\bY | \bZ_0}(\by | \bz_0)\, p_{\bZ_0 | \bZ_t, \xi}(\bz_0 | \bz_t, k)\, d \bz_0.
\end{aligned}
\end{equation}
Here, the second equality follows from the conditional independence  
$p_{\bY | \bZ_0, \bZ_t, \xi}(\by | \bz_0, \bz_t, k) = p_{\bY | \bZ_0}(\by | \bz_0)$,  
which holds because $\bY$ depends on $\bZ_t$ only through $\bZ_0$ as specified by the observation model in Eq.~\eqref{eq.filtering.obs}.  
The term $p_{\bY | \bZ_0}(\by | \cdot) \equiv p_{\bY | \bX}(\by | \cdot)$ denotes the likelihood function,  
and $p_{\bZ_0 | \bZ_t, \xi}(\cdot | \bz_t, k)$ represents the reverse transition kernel conditioned on the $k$-th Gaussian component of the prior.

Under the GMM assumption on $\bZ_0$, Proposition~\ref{prop.prior.trans.density} implies that the prior reverse transition kernel $p_{\bZ_0 | \bZ_t, \xi}(\bz_0 | \bz_t, k)$ in Eq.~\eqref{eq:y-zt-xi-decom} is Gaussian, given by
\begin{equation}\label{eq:reserse-kernel-gmm}
    p_{\bZ_0 | \bZ_t, \xi}(\bz_0| \bz_t, k) = \phi(\bz_0; \bmu_{0|t,k}(\bz_t), \bSigma_{0|t}),
\end{equation}
where the conditional mean $\bmu_{0|t,k}(\bz_t)$ and covariance $\bSigma_{0|t}$ are specified by Proposition~\ref{prop.prior.trans.density} as
\begin{equation}\label{eq:mu_sig_0tk}
\begin{aligned}
    \bmu_{0|t,k}(\bz_t) &= \bmu_k + \alpha_t \bSigma \left( \alpha_t^2 \bSigma + \beta_t^2 \bI \right)^{-1} (\bz_t - \alpha_t \bmu_k),\\
    \bSigma_{0|t} &= \bSigma - \alpha_t^2 \bSigma \left( \alpha_t^2 \bSigma + \beta_t^2 \bI \right)^{-1} \bSigma.
\end{aligned}
\end{equation}

Since the dependence of $p_{\bY | \bZ_t, \xi}(\by | \bz_t, k)$ on $\bz_t$ is solely through the conditional mean $\bmu_{0|t,k}(\bz_t)$ inside the Gaussian kernel 
$p_{\bZ_0 | \bZ_t, \xi}(\bz_0 | \bz_t, k)$ from Eq.~\eqref{eq:reserse-kernel-gmm}, 
we apply a reparameterization trick to make this dependency explicit:

\begin{equation}\label{eq:change-var}
    (\bZ_0 | \bZ_t, \xi=k) \deq \bmu_{0|t,k}(\bz_t) + \bL_{0|t} \bveps,
\end{equation}
where $\bveps \sim \mathcal{N}(\bzero, \bI_d)$ and $\bL_{0|t}$ is the Cholesky factor of $\bSigma_{0|t}$.
Using this reparameterization, the gradient term can be rewritten as
\begin{equation}\label{eq.i2.gradient}
    \begin{aligned}
    & \nabla_{\bz_t} p_{\bY | \bZ_t, \xi}(\by | \bz_t, k) \\
    &= \nabla_{\bz_t} \int p_{\bY | \bZ_0}(\by | \bz_0)\, 
       \phi(\bz_0; \bmu_{0|t,k}(\bz_t), \bSigma_{0|t})\, d\bz_0 \\
    &= \nabla_{\bz_t} 
       \mathbb{E}_{\bZ_0 \sim \mathcal{N}(\bmu_{0|t,k}(\bz_t),\, \bSigma_{0|t})}
       \left[p_{\bY | \bZ_0}(\by | \bZ_0)\right] \\
    &= \nabla_{\bz_t} 
       \mathbb{E}_{\bveps \sim \mathcal{N}(\bzero, \bI)}
       \left[p_{\bY | \bZ_0}\big(\by \,\big|\, 
       \bmu_{0|t,k}(\bz_t) + \bL_{0|t}\bveps \big)\right] \\
    &= \mathbb{E}_{\bveps \sim \mathcal{N}(\bzero, \bI)}
       \left[\nabla_{\bz_t} p_{\bY | \bZ_0}\big(\by \,\big|\, 
       \bmu_{0|t,k}(\bz_t) + \bL_{0|t}\bveps \big)\right] \\
    &= \bJ(t) \,
       \mathbb{E}_{\bveps \sim \mathcal{N}(\bzero, \bI)}
       \left[
       \bS_{\bY | \bZ_0}\big(\by \,\big|\, 
       \bmu_{0|t,k}(\bz_t) + \bL_{0|t}\bveps\big)\,
       p_{\bY | \bZ_0}\big(\by \,\big|\, 
       \bmu_{0|t,k}(\bz_t) + \bL_{0|t}\bveps\big)
       \right] \\
    &= \bJ(t)\,
       \mathbb{E}_{\bZ_0 \sim \mathcal{N}(\bmu_{0|t,k}(\bz_t),\, \bSigma_{0|t})}
       \left[
       \bS_{\bY | \bZ_0}(\by | \bZ_0)\,
       p_{\bY | \bZ_0}(\by | \bZ_0)
       \right] \\
    &= \bJ(t)\,
       \int \bS_{\bY | \bZ_0}(\by | \bz_0)\,
       p_{\bY | \bZ_0}(\by | \bz_0)\,
       p_{\bZ_0 | \bZ_t, \xi}(\bz_0 | \bz_t, k)\, d\bz_0,
    \end{aligned}
\end{equation}
where $\nabla p_{\bY | \bZ_0}(\by|\cdot) = p_{\bY | \bZ_0}(\by|\cdot)\, \bS_{\bY | \bZ_0}(\by|\cdot)$ follows from the Gaussian observation noise assumption, 
and $\bJ(t)$ is the Jacobian matrix of $\bmu_{0|t,k}(\cdot)$, given by
\begin{equation}\label{eq:mu-jacobian}
    \bJ(t) = 
    \alpha_t\, \bSigma \, (\alpha_t^2 \bSigma + \beta_t^2 \bI_d)^{-1}.
\end{equation}

Substituting Eq.~\eqref{eq.i2.gradient} into $I_2$ from Eq.~\eqref{eq:final-score-1}, we obtain
\begin{equation}\label{eq:I2-final}
\begin{aligned}
    I_2 
    &= \sum_{k=1}^K 
    \frac{ p_{\bZ_t | \xi}(\bz_t | k)\, p_{\xi}(k) }
         { p_{\bZ_t | \bY}(\bz_t | \by)\, p_{\bY}(\by) } 
       \nabla_{\bz_t} p_{\bY | \bZ_t, \xi}(\by | \bz_t, k) \\[4pt]
    &= \sum_{k=1}^K 
       \frac{ p_{\bZ_t, \xi}(\bz_t, k) }
            { p_{\bZ_t, \bY}(\bz_t, \by) } 
       \bJ(t) 
       \int \bS_{\bY | \bZ_0}(\by | \bz_0)\,
            p_{\bY | \bZ_0}(\by | \bz_0)\,
            p_{\bZ_0 | \bZ_t, \xi}(\bz_0 | \bz_t, k)\, d\bz_0 \\[4pt]
    &= \bJ(t)
       \sum_{k=1}^K 
       \int 
       \frac{
           p_{\bZ_t, \xi}(\bz_t, k)\,
           p_{\bY | \bZ_0}(\by | \bz_0)\,
           p_{\bZ_0 | \bZ_t, \xi}(\bz_0 | \bz_t, k)
       }
       { p_{\bZ_t, \bY}(\bz_t, \by) }\,
       \bS_{\bY | \bZ_0}(\by | \bz_0)\, d\bz_0 \\[4pt]
    &= \bJ(t)
       \sum_{k=1}^K 
       \int 
       \frac{ p_{\bZ_0, \xi, \bZ_t, \bY}(\bz_0, k, \bz_t, \by) }
            { p_{\bZ_t, \bY}(\bz_t, \by) }\,
       \bS_{\bY | \bZ_0}(\by | \bz_0)\, d\bz_0 \\[4pt]
    &= \bJ(t)
       \sum_{k=1}^K 
       \int p_{\bZ_0, \xi | \bZ_t, \bY}(\bz_0, k | \bz_t, \by)\,
       \bS_{\bY | \bZ_0}(\by | \bz_0)\, d\bz_0 \\[4pt]
    &= \bJ(t)
       \int p_{\bZ_0 | \bZ_t, \bY}(\bz_0 | \bz_t, \by)\,
       \bS_{\bY | \bZ_0}(\by | \bz_0)\, d\bz_0 \\[4pt]
    &= \bJ(t)\,
       \mathbb{E}_{\bZ_0 \sim p_{\bZ_0 | \bZ_t, \bY}}
       \big[\bS_{\bY | \bZ_0}(\by | \bZ_0)\big]\\[4pt]
    &= \bJ(t)\,
       \mathbb{E}
       \big[\bS_{\bY | \bX}(\by | \bZ_0) | \bZ_t = \bz_t, \bY = \by \big]
\end{aligned}
\end{equation}

The expression in Eq.~\eqref{eq:I2-final} represents the second term of the posterior score in Eq.~\eqref{eq.post.score.expression.final}, which corresponds to the conditional expectation of the observation likelihood score $\bS_{\bY | \bX}(\by | \cdot)$ with respect to the conditional density $p_{\bZ_0 \mid \bZ_t, \bY}$.

\section{Prior GM construction}\label{append.gm.prior}

In practical DA problems, only prior samples are typically available. To apply our posterior sampling framework, we must construct a prior GMM from these samples $\{\bx_k\}_{k=1}^K$. The prior covariance $\bSigma$ is first estimated as the sample covariance of $\{\bx_k\}_{k=1}^K$, with classical covariance localization applied as needed. For the GMM means, a naive choice would be to set $\bmu_k = \bx_k$ for each $k$, i.e., each prior sample defines a mixture component.

However, this naive construction inflates the variance of the resulting GMM. Since the samples $\{\bx_k\}_{k=1}^K$ already exhibit variance $\hat{\bSigma}$, assigning them as GMM means while also introducing a shared covariance $\bSigma$ effectively double-counts variability, producing a mixture that is more dispersed than the given prior ensemble. To correct this, we introduce a parameter $\gamma \in [0,1]$, referred to as the variance-splitting parameter, which balances the contribution of variance between the GMM covariance and the spread of the GMM means.

Formally, let $\Bar{\bSigma}$ denote the sample covariance from $\{\bx_k\}_{k=1}^K$, and let $\Bar{\bx} = \tfrac{1}{K}\sum_k \bx_k$ be the sample mean. We then construct the GMM with shared covariance and means
\begin{equation}
    \begin{aligned}
        \bmu_k &= \sqrt{1-\gamma^2} \, (\bx_k - \Bar{\bx}) + \Bar{\bx}, \\
        \bSigma &= \gamma \, \Bar{\bSigma}.
    \end{aligned}
\end{equation}

Let $\Bar{\bsigma}^2$ denote the sample variance of $\{\bx_k\}_{k=1}^K$. By construction, the variance of the resulting GMM satisfies
\begin{equation}
    \mathrm{Var}(\text{GMM}(\bmu_k, \bSigma)) 
    = \mathrm{Var}(\{\bmu_k\}) + \mathrm{diag}(\bSigma) 
    = (1-\gamma^2) \, \Bar{\bsigma}^2 + \gamma^2 \, \Bar{\bsigma}^2
    = \Bar{\bsigma}^2,
\end{equation}
ensuring consistency with the variance of the prior ensemble.

The value of $\gamma$ determines how the prior variability is represented in the GMM. A larger $\gamma$ emphasizes the global Gaussian covariance structure, while a smaller $\gamma$ retains more of the non-Gaussian features of the prior through the spread of $\bmu_k$. Figure~\ref{fig:gmm_prior_gamma} illustrates this effect: larger $\gamma$ values produce a more Gaussian-like prior, whereas smaller $\gamma$ values yield a particle-like distribution that better captures non-Gaussian features. Two extreme cases are particularly instructive. When $\gamma=0$, the covariance vanishes ($\bSigma=\bzero$) and the GMM reduces to a mixture of delta functions centered at the prior samples, closely resembling a particle filter. When $\gamma=1$, all means collapse to $\Bar{\bx}$, and the prior reduces to a single Gaussian $\mN(\Bar{\bx}, \Bar{\bSigma})$. However, it is worth noting that even when the prior is a single Gaussian, the posterior distribution can remain highly non-Gaussian, as will be demonstrated in Figure~\ref{fig:non-Gaussian-dist}. This is because the posterior samples are generated by solving the reverse SDE/ODE, without directly imposing any Gaussian constraints.

\begin{figure}[h]
    \centering
    \includegraphics[width=0.8\linewidth]{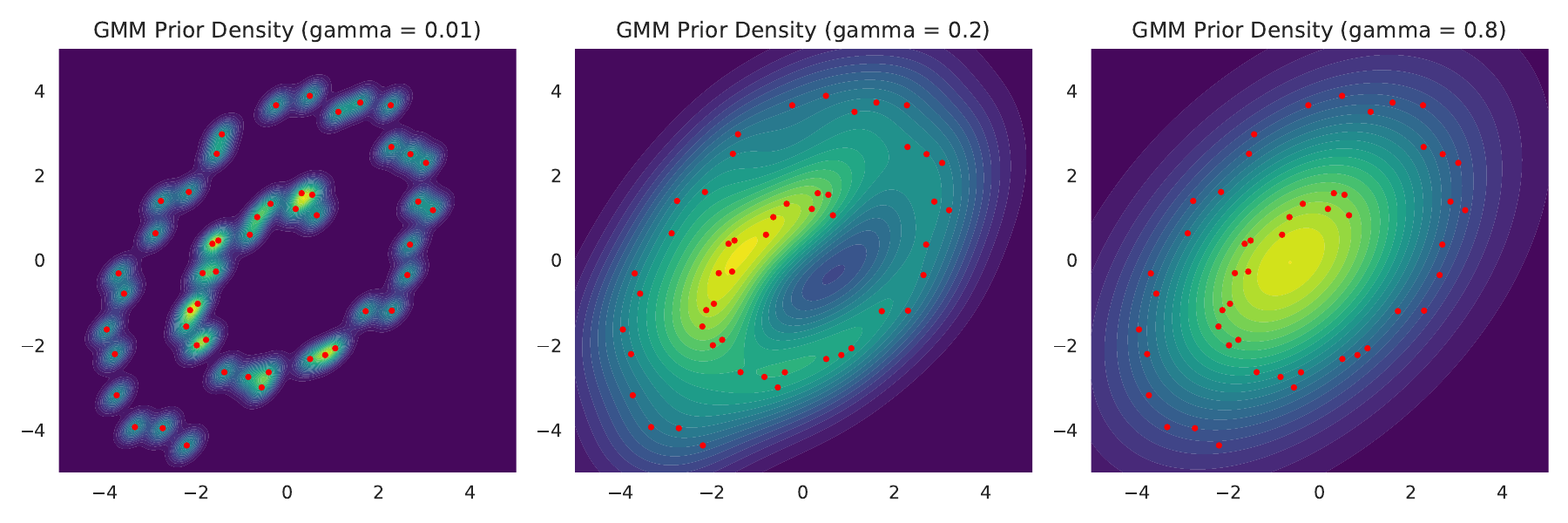}
    \caption{Constructed GMM priors for different values of $\gamma$. Red dots indicate the given prior ensemble members. As $\gamma$ increases, the prior approaches a Gaussian distribution, while smaller $\gamma$ values yield a particle-like distribution that preserves more non-Gaussian structure.}
    \label{fig:gmm_prior_gamma}
\end{figure}

\section{Alternative Approximations for $w^{\mathrm{obs}}(k,\by,\bz_t)$}\label{sec.w.obs.extra}
In this section, we present two computationally efficient approximations for $w^{\mathrm{obs}}(k,\by,\bz_t)$, derived from simplified linearizations of the nonlinear observation function $\mM(\cdot)$.  
These approximations reduce the cost of evaluating the Jacobian $\bJ_{\mM}(\bmu_{0|t,k}(\bz_t))$ for each mixture component $k$ when such computations are prohibitively expensive.

\begin{itemize}
    \item \textbf{Zeroth-order approximation.}  
    Neglect the linear term in the Taylor expansion of $\mM(\cdot)$ and approximate it as a constant function,
    $\mM(\bx) \approx \mM(\bmu_k^*)$, where $\bmu_k^*$ is the chosen evaluation point.  
    Under this approximation,
    \begin{equation}\label{eq:w-obs-simple1}
        w^{\mathrm{obs}}(k,\by,\bz_t) 
        = (\by - \mM(\bmu_k^*))^\top \bSigma_{\mathrm{obs}}^{-1} (\by - \mM(\bmu_k^*)) 
        = p_{\bY | \bX}(\by \mid \bmu_k^*).
    \end{equation}
    Hence, $w^{\mathrm{obs}}(k,\by,\bz_t)$ reduces to evaluating the likelihood function 
    $p_{\bY | \bX}(\by \mid \cdot)$ at the expansion point $\bmu_k^*$.  
    For instance, when $\bmu_k^* = \bmu_{0|t,k}(\bz_t)$, we have 
    $w^{\mathrm{obs}}(k,\by,\bz_t) = p_{\bY | \bX}(\by \mid \bmu_{0|t,k}(\bz_t))$.

    \item \textbf{Shared expansion point.}  
    Use a common expansion point $\bmu^*$ for all mixture components, i.e., $\bmu_k^* = \bmu^*$.  
    In this case, Eq.~\eqref{eq.w.obs.expression} simplifies to
    \begin{equation}\label{eq:w-obs-simple2}
        w^{\mathrm{obs}}(k,\by,\bz_t) 
        = (\by^* - \bH \bmu_{0|t,k}(\bz_t))^\top 
          \big(\bH \bSigma_{0|t} \bH^\top + \bSigma_{\mathrm{obs}}\big)^{-1} 
          (\by^* - \bH \bmu_{0|t,k}(\bz_t)),
    \end{equation}
    where $\by^* = \by - \mM(\bmu^*) + \bJ_{\mM}(\bmu^*)\bmu^*$ and $\bH = \bJ_{\mM}(\bmu^*)$.  
    This approach requires only a single evaluation of $\bJ_{\mM}(\cdot)$, significantly reducing computational cost.  
    Notably, this choice is also consistent with the linear observation model, for which $\bJ_{\mM}(\cdot)$ is constant.
\end{itemize}


\appendix
\end{document}